\def\reels{\mathbb{R}}
\def\nac{\check{\nabla}}
\def\nah{\hat{\nabla}}
\def\cx{\check{X}}
\def\cy{\check{Y}}
\def\cz{\check{Z}}
\def\ct{\check{T}}
\def\ccr{\check{R}}
\def\cs{\check{S}}
\def\cg{\check{\gamma}}
\def\hv{\hat{V}}\def\hu{\hat{U}}
\def\hw{\hat{W}}
\def\ha{\hat{A}}
\def\hb{\hat{B}}
\def\hc{\hat{C}}
\def\hr{\hat{R}}
\def\hs{\hat{S}}
\def\hphi{\hat{\Phi}}
\def\hpsi{\hat{\Psi}}
\newtheorem{theo}{Theorem}
\newtheorem{prop}[theo]{Proposition}
\newtheorem{corr}[theo]{Corollary}
\newtheorem{lem}[theo]{Lemma}
\newtheorem{defi}{Definition}
\newtheorem*{exs}{Examples}
\newtheorem{rem}{Remark}
\newtheorem*{nota}{Notations}
\title[Can. torsion-free connections on the total spaces $TM$ and $T^*M$]{Canonical torsion-free connections on the total space of the tangent and the cotangent bundle}
\author[Lionel Bérard Bergery]{Lionel Bérard Bergery}
\author[Tom Krantz]{Tom Krantz}
\thanks{e-mail: Lionel.Berard-Bergery@iecn.u-nancy.fr, Tom.Krantz@iecn.u-nancy.fr\\
Address: Institut Élie Cartan Nancy. Université Henri Poincaré Nancy
1.\\ B.P. 239, F-54506 Vandoeuvre-lès-Nancy Cedex, France.}
\begin{document}
\begin{abstract}
In this paper we define a class of torsion-free connections on the total space of the (co-)tangent bundle over a base-manifold with a connection and for which tangent spaces to the fibers are parallel. Each tangent space to a fiber is flat for these connections and the canonical projection from the (co-)tangent bundle to the base manifold is totally geodesic.
In particular cases the connection is metric with signature (n,n) or symplectic and admits a single parallel totally isotropic tangent n-plane.
\end{abstract}
\maketitle
\date{\today}
\section{Introduction}
The classification of holonomy is far from being complete by today. In the semi-riemannian case and neutral signature there is little known beside the results of the paper~\cite{LBBAI}. In this paper in particular the candidates for indecomposable torsion-free holonomy of signature $(2,2)$ are listed. Metrics are constructed for
the case where the holonomy admits two totally isotropic and complementary invariant spaces but there was no construction yet for metrics for the holonomy groups admitting a single totally isotropic invariant space. T. Leistner and A. Galaev constructed for all but one of these missing cases metrics.
The construction in this paper gives a general answer for all the missing cases in signature $(2,2)$. The construction works not only for signature $(n,n)$ giving holomies admitting a single totally isotropic invariant n-plane.
In the general context of torsion-free connections, the construction gives a lot of examples of manifolds with a torsion-free connection such that the holonomy admits a single invariant subspace with given dimension and fixed holonomy representation sub- and dual quotient representation. The construction can be generalized to other vector bundles. We recall the general results but omit the proofs of these which will be accessible in another paper concerning the general construction.

\section{Definitions}
\subsection{Fibered spaces}
\begin{nota}Let $B$ be a (finite-dimensional) manifold equipped with a torsion-free connection $\nac$
on the bundle $TB$. Let $\check{R}$ be its curvature tensor defined
by $\check{R}(\cx,\cy):= \nac_{[\cx,\cy]}-[\nac_{\cx},\nac_{\cy}]$
for $\cx$ and $\cy$ sections of $TB$.

Let $(M,\pi,B)$ be a (finite-dimensional) vector bundle over $B$ with projection $\pi: M \to B$
and fiber $F$. For $b\in B$ let $F_b$ be the fiber $\pi^{-1}(b)$.
Let $\nah$ be a connection on the vector bundle $M$. Let $\hr$ be
the curvature tensor of $\nah$.
\end{nota}

\begin{exs}
\begin{itemize}
\item The first example we will consider is the case $M=TB$ equipped with the connection $\nah=\nac$.
\item We will consider also the case $M=T^*B$ (equipped with the
dual connection $\nah=\nac^*$ of $\nac$).

Recall that for a section $\xi$ of $T^*B$ and sections $\cx$, $\cy$,
$\cz$ of $TB$ we have:

$$(\nac^*_{\cx} \xi)(\cy)=\cx\cdot\xi(\cy)-\xi(\nac_{\cx} \cy),$$

and $$(\hr(\cx,\cy)\xi)(\cz)=-\xi(\ccr(\cx,\cy)\cz).$$

\end{itemize}
\end{exs}

A section $X$ of $TM$ is said to be {\em $\pi$-related} to a
section $\cx$ of $TB$ if $\forall x\in M,
T_x\pi(X_x)=\cx_{\pi(x)}$.

For $X$ (resp. $Y$) sections of $TM$, $\pi$-related to $\cx$ (resp. $\cy$), $[X,Y]$ is $\pi$-related to
$[\cx,\cy]$.
For a proof of the latter statement see \cite{spivak} or \cite{besse} chapter 9.

Let $\mathcal{V}$ be the "vertical" distribution:
$\mathcal{V}_x=\{v\in T_xM \; | \; (T\pi)_x(v)=0\}$.

A section $V$ of $TM$ is said to be {\em vertical} if $\forall x\in
M$, $V_x\in \mathcal{V}_x$.

A section of $TM$ is clearly vertical if and only if it is $\pi$-related to
the null section of $TB$.

Note that from the preceding follows:
\begin{itemize}
\item $[V,W]$ is vertical if $V$ and $W$ are vertical.
\item $[X,V]$ is vertical if $V$ is vertical and the section $X$ of $TM$ $\pi$-related to some section $\cx$ of $TB$.
\end{itemize}

To the
connection $\nah$ corresponds a unique "horizontal" distribution
$\mathcal{H}$ with the following properties:

\begin{enumerate}[i)]
\item $\forall x\in M, T_xM=\mathcal{V}_x \oplus \mathcal{H}_x$

\item Let $\gamma:[0,1]\to M$ be a $C^\infty$ curve. We have: $\sigma=\pi \circ \gamma$ is a curve of $B$.
$\gamma$ can be seen as a section of the fiber bundle $(M,\pi,B)$ over the curve $\sigma$. The connection $\nah$ allows to derive sections over a curve and to define parallel transport $\tau_\sigma$ along the curve $\sigma$ by: $\tau_\sigma(t,\xi)$ (for $t\in [0,1]$ and $\xi\in F_{\sigma(0)}$) is equal to $X_t\in F_{\sigma(t)}$ where $X$ verifies $(\nabla_{\dot{\sigma}} X)_t=0$, $\forall t\in [0,1]$ and $X_0=\xi$.
The property $\mathcal{H}$ has to verify is: For every smooth curve $\gamma$ we have
$\forall t\in ]0,1[$, $\gamma_t=\tau_{\sigma}(t,\gamma_0)$ if and only if $\forall t\in
]0,1[$, $\dot \gamma_t \in \mathcal{H}_{\gamma_t}$.

If, as in our case $\nah$ is a linear connection, then $\mathcal{H}$ has to be "linear" in the following sense:
$\tau_\sigma(t,\alpha \xi+\beta \xi')= \alpha \tau_\sigma(t,\xi)+\beta \tau_\sigma(t,\xi')$ for all $t\in [0,1]$, $\alpha$ and $\beta$ reals, $\xi$ and $\xi'$ two vectors of $F_{\sigma(0)}$.
\end{enumerate}

A section $X$ of $TM$ is said to be {\em horizontal} if $\forall
x\in M, X_x\in \mathcal{H}_x$.

A section $X$ of $TM$ is said to be {\em basic} if it is horizontal
and $\pi$-related to a section of $TB$.

Note that $\mathcal{H}[X,Y]$ is basic if $X$ and $Y$ are basic.
We will see later that $\mathcal{V}[X,Y]$ is "tensorial" for $X$ and $Y$ basic {\em i.e.} it only depends on the value of $X$ and $Y$ on the corresponding point.

For any section $\cx$ of $TB$ there is a unique basic section $X$ of
$TM$ $\pi$-related to $\cx$.
For any section $Y$ of $TM$ $Y$ is $\pi$-related to $\cx$, if and only if there is a vertical section $V$ of $TM$ such that $Y=X+V$

\subsection{Special vector fields on $M$}

\subsubsection{Vertical vector fields constant along the fibers} For
$b\in B$ and $x\in F_b$, define the translation $t_x: F_b\to F_b$ by
$t_x(y)=x+y$ and note $O_b$ the origin of the vector space $F_b$.

We can identify $T_x F_b$ with $\mathcal{V}_x$ through the inclusion
$F_b\subset M$.

\begin{defi}
A vertical section $V$ of $TM$ is said to be {\em constant along the
fibers}(or simply {\em constant}) if for all $b\in B$ and $x\in F_b$,
$V(x)=t_x(V(O_b))$.
\end{defi}

A simple fact is the following:
\begin{prop}
For each section $\hv$ of the fibered space $(M,\pi,B)$ there is a unique vertical section $V$
of $TM$ which is constant along the fibers and such that
$\hv(b)=V(O_b)$ (with the identification $F_b \simeq T_{O_b} F_b
\simeq \mathcal{V}_{O_b}$). We say that $V$ is associated to $\hv$.
\end{prop}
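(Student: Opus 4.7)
The plan is to exhibit $V$ explicitly, verify it has the desired properties, and then observe that uniqueness is forced by the constancy condition. Given $\hv$, the only possible candidate is: at a point $x \in F_b$, set $V(x) := (t_x)_{\ast}(\hv(b))$, where $\hv(b) \in F_b$ is viewed as an element of $T_{O_b}F_b \simeq \mathcal{V}_{O_b}$ via the canonical identification of a vector space with its tangent space at the origin, and $(t_x)_{\ast}$ is the differential of the translation $t_x : F_b \to F_b$.

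The algebraic checks are immediate. By construction $V(x)$ lies in $T_x F_b = \mathcal{V}_x$, so $V$ is vertical. Evaluating at the origin gives $V(O_b) = (t_{O_b})_{\ast}(\hv(b)) = \hv(b)$ since $t_{O_b}$ is the identity map of $F_b$; and the constancy relation $V(x) = t_x(V(O_b))$ then holds tautologically.

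The only non-formal step is smoothness, which I would check in a local trivialisation $\Phi : \pi^{-1}(U) \to U \times F$ of the vector bundle. In such a chart, $\hv$ is represented by a smooth map $\hv_{\Phi} : U \to F$, the origin $O_b$ corresponds to $(b,0)$, and $t_x$ on $F_b$ is transported to the linear translation of $F$ by the second component of $\Phi(x)$. Under the product decomposition $T(U \times F) = TU \oplus TF$ the pushed-forward field $\Phi_{\ast} V$ at a point $(b,f)$ is $(0,\hv_{\Phi}(b))$, which depends smoothly on $(b,f)$; hence $V$ is smooth.

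Uniqueness requires no extra work. If $V'$ is another vertical section, constant along the fibers, with $V'(O_b) = \hv(b)$ for every $b$, then constancy forces $V'(x) = t_x(V'(O_b)) = t_x(\hv(b)) = V(x)$ for every $x \in F_b$, so $V' = V$. The only mildly delicate point in the whole argument is keeping track of the three canonical identifications $F_b \simeq T_{O_b}F_b \simeq \mathcal{V}_{O_b}$, but since they all commute with translations there is no substantive obstacle.
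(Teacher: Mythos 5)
Your proof is correct and is exactly the intended argument: the paper states this as ``a simple fact'' with no written proof, and your explicit construction $V(x)=(t_x)_*(\hv(b))$, the tautological verification of constancy and uniqueness, and the smoothness check in a local trivialization match the spirit of the neighbouring Lemma~\ref{linear_constant}, whose proof is likewise ``can be checked in a local trivialization.'' No gaps.
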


\subsubsection{Linear vertical vector fields} We will note $End_B(M)$ the endomorphism fiber bundle of $M$ over $B$.
Let $\ha$ be a section
of $End_B(M)$ {\em i.e.} for all $b\in B$, $\ha_b\in End(F_b)$ and
$\ha_b$ depending $C^\infty$ of $b$.

For $b\in B$ and $x\in F_b$, we can identify $F_b$ with $T_x F_b$ by
the mapping $Id_x$ defined to be the composition of the canonical
isomorphism $F_b \simeq T_{O_b} F_b$ and the isomorphism $T_{O_b}
t_x: T_{O_b} F_b \to T_x F_b$.

\begin{defi} Let $\ha$ be a a section of $End_B(M)$.
A vertical section $A$ of $TM$ is said to be {\em linear along the
fibers associated to $\ha$}(or simply {\em linear}) if
$A_x=Id_x(\ha(x))$.
\end{defi}

\begin{lem}\label{linear_constant} A vertical section $A$ of $TM$ is linear if and only if
$[A,V]$ is constant for all $V$ constant, and for all $b\in B$,
$A(O_b)=0$.
\end{lem}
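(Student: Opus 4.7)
The plan is to verify the equivalence by a direct local computation in a vector bundle trivialization. Over a chart $U \subset B$ with coordinates $(b^\alpha)$ and a local frame $(e_i)$ of $M|_U$, every point of $\pi^{-1}(U)$ is encoded by $(b^\alpha, y^i)$ where $y^i$ are the linear fiber coordinates. In such a chart a vertical field reads $A = A^i(b,y)\,\partial/\partial y^i$; a constant vertical field is $V = \hv^i(b)\,\partial/\partial y^i$ (components independent of $y$); and a linear vertical field associated to $\ha_b = (\ha^i_j(b))$ is $A = \ha^i_j(b)\, y^j\, \partial/\partial y^i$.

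For the forward direction, assume $A$ is linear, so $A = \ha^i_j(b)\, y^j\, \partial/\partial y^i$. For any constant $V$, the standard bracket formula yields
$[A,V] = A(\hv^i)\,\partial/\partial y^i - V(\ha^i_j y^j)\,\partial/\partial y^i = -\ha^i_j(b)\,\hv^j(b)\,\partial/\partial y^i$,
which visibly depends only on $b$, hence is a constant vertical field. Moreover $A(O_b)=\ha^i_j(b)\cdot 0 = 0$, so both conditions hold.

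For the converse, suppose $[A,V]$ is constant for every constant $V$, and $A(O_b) = 0$. Writing $A = A^i(b,y)\,\partial/\partial y^i$, the same bracket computation gives $[A, V] = -\hv^j(b)\,(\partial A^i/\partial y^j)(b,y)\,\partial/\partial y^i$. Constancy of this field along each fiber, together with the freedom to choose the components $\hv^j(b)$ arbitrarily (by picking an appropriate local section $\hv$ of $M$), forces each $\partial A^i/\partial y^j$ to be independent of $y$; hence $A^i(b,y) = \ha^i_j(b)\, y^j + c^i(b)$ for smooth functions $\ha^i_j, c^i$ on $U$. The hypothesis $A(O_b)=0$ gives $c^i(b)=0$, so $A$ is linear, associated to the section $\ha = (\ha^i_j)$ of $End_B(M)$.

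The only point demanding care is to confirm that the intrinsic notions of \emph{constant} and \emph{linear} vertical sections, defined via the fiber translations $t_x$ and the identifications $Id_x$, do coincide in any vector bundle trivialization with the pointwise statements ``components independent of $y$'' and ``components linear in $y$ and vanishing at $y=0$''. This is immediate because the transition functions of the vector bundle $(M,\pi,B)$ act linearly on fibers, so they preserve both the translation structure and the zero section; consequently the local characterizations used above are invariantly defined, and the two directions above complete the proof.
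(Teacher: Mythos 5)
Your proof is correct and follows exactly the route the paper indicates: the paper's proof consists solely of the remark that the lemma ``can be checked in a local trivialization,'' and you carry out precisely that check (the bracket computation in fiber coordinates, the affine-in-$y$ conclusion, and the compatibility of the intrinsic definitions with linear transition functions). No discrepancies; you have simply supplied the details the paper leaves to the reader.
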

\begin{proof}
Can be checked in a local trivialization of the bundle $M$.
\end{proof}

\subsubsection{Affine vertical vector fields}
An {\em affine vertical vector field} is the sum of a constant vertical vector field with a linear vertical vector field.

\begin{nota} In the following
$X, Y, Z, ...$ will denote basic vector fields, and $\cx, \cy, \cz, ...$
the sections of $TB$ $\pi$-related to the preceding. Constant
vertical vector fields will be noted $U, V, W$ and $\hu, \hv, \hw$
their associated sections of $M$. Linear vertical vector fields will
be noted $A, B, C$ and $\ha, \hb, \hc$ the associated sections of
$End_B(M)$.
\end{nota}

\subsection{Basic properties}
\begin{prop}\label{basic_results}
For $V$, $W$ vertical constant along the fibers corresponding to the
sections of $M$, $\hv$, $\hw$, for $A$, $B$ vertical linear
associated to the sections of $End_B(M)$, $\ha$, $\hb$, for $X$, $Y$
basic $\pi$-related to $\cx$, $\cy$, we have:
\begin{enumerate}[(i)]
\item $[V,W]=0$;
\item $[A,V]$ is vertical and constant associated to
$-\ha(\hv)$;
\item $[A,B]$ is vertical and linear associated to $-[\ha,\hb]$;
\item $[X,V]$ is vertical and constant associated
to $\nah_{\cx}\hv$;
\item $[X,A]$ is vertical and linear associated
to $\nah_{\cx}\ha$;
\item The horizontal component $\mathcal{H}[X,Y]$ of $[X,Y]$ is
basic $\pi$-related to $[\cx,\cy]$;
\item The vertical component $\mathcal{V}[X,Y]$ of $[X,Y]$ is linear associated to $\hr(\cx,\cy)$.
\end{enumerate}
\end{prop}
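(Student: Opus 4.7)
My plan is to reduce items (i)--(v) to explicit bracket computations in a local trivialization, to deduce (vi) directly from the already-recalled preservation of $\pi$-relatedness under the Lie bracket, and to extract (vii) from the Jacobi identity combined with the earlier items, rather than a separate coordinate calculation.

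I will work in a trivialization $\pi^{-1}(U)\simeq U\times F$ with coordinates $(x^i,y^\alpha)$. There $\nah$ is encoded by Christoffel symbols $\Gamma^\alpha_{i\beta}(x)$; a constant vertical field associated to $\hv=v^\alpha(x)e_\alpha$ is $V=v^\alpha(x)\,\partial/\partial y^\alpha$; a linear vertical field associated to $\ha=A^\alpha_\beta(x)\,e_\alpha\otimes e^\beta$ is $A=A^\alpha_\beta(x)\,y^\beta\,\partial/\partial y^\alpha$; and the basic lift of $\cx=\cx^i\partial_i$ is $X=\cx^i\bigl(\partial_i-\Gamma^\alpha_{i\beta}\,y^\beta\,\partial/\partial y^\alpha\bigr)$. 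Items (i)--(iii) reduce to short bracket computations that never invoke $\Gamma$ and repeat the pattern of Lemma~\ref{linear_constant}. For (iv) and (v), expanding $[X,V]$ and $[X,A]$ in these coordinates produces exactly the coordinate expressions of $\nah_{\cx}\hv$ and $\nah_{\cx}\ha$.

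Item (vi) is immediate from the general fact, recalled before Subsection~2.2, that $[X,Y]$ is $\pi$-related to $[\cx,\cy]$: the vertical part of $[X,Y]$ lies in $\ker T\pi$, so the horizontal part is horizontal and $\pi$-related to $[\cx,\cy]$; by the uniqueness of basic lifts it must therefore be the basic lift of $[\cx,\cy]$.

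The substantive step is (vii), which I will obtain from the Jacobi identity. For any constant vertical field $V$ associated to $\hv$, iterating (iv) gives that $[X,[Y,V]]$ is constant vertical associated to $\nah_{\cx}\nah_{\cy}\hv$, and symmetrically for $[Y,[X,V]]$; the Jacobi identity then identifies $[[X,Y],V]$ with the constant vertical field associated to $\nah_{\cx}\nah_{\cy}\hv-\nah_{\cy}\nah_{\cx}\hv$. Writing $[X,Y]=\mathcal{H}[X,Y]+\mathcal{V}[X,Y]$ and applying (vi) followed by (iv) to the horizontal piece shows that $[\mathcal{H}[X,Y],V]$ is constant vertical associated to $\nah_{[\cx,\cy]}\hv$. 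Subtracting, $[\mathcal{V}[X,Y],V]$ is constant vertical associated to
\[
\nah_{\cx}\nah_{\cy}\hv-\nah_{\cy}\nah_{\cx}\hv-\nah_{[\cx,\cy]}\hv \;=\; -\hr(\cx,\cy)\hv,
\]
which by (ii) agrees with $[A_0,V]$ where $A_0$ denotes the linear vertical field associated to $\hr(\cx,\cy)$. Since basic vector fields are tangent to the zero section (their flows are parallel transports, and parallel transport fixes $0$), $\mathcal{V}[X,Y]$ vanishes at each $O_b$; Lemma~\ref{linear_constant} then identifies $\mathcal{V}[X,Y]$ with $A_0$. The main obstacle is keeping track of signs against the paper's curvature convention $\hr=\nah_{[\cdot,\cdot]}-[\nah,\nah]$, and being careful that the coordinate formula for the basic lift genuinely matches the definition of the horizontal distribution via parallel transport.
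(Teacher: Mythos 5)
Your proposal is correct. Note that the paper itself gives no proof of this proposition (the introduction defers proofs of the general statements to a companion paper), so there is nothing to compare line by line; but every step you give checks out. Items (i)--(v) are indeed immediate in a trivialization with the coordinate expressions you write, and your formula $X=\cx^i\bigl(\partial_i-\Gamma^\alpha_{i\beta}y^\beta\,\partial/\partial y^\alpha\bigr)$ does match the definition of $\mathcal{H}$ via parallel transport. Item (vi) is exactly the $\pi$-relatedness fact recalled in the paper plus uniqueness of basic lifts. For (vii), your Jacobi-identity argument is a clean way to avoid a second coordinate computation: the identification $[\mathcal{V}[X,Y],V]$ constant associated to $\nah_{\cx}\nah_{\cy}\hv-\nah_{\cy}\nah_{\cx}\hv-\nah_{[\cx,\cy]}\hv=-\hr(\cx,\cy)\hv$ has the right sign for the paper's convention $\hr(\cx,\cy)=\nah_{[\cx,\cy]}-[\nah_{\cx},\nah_{\cy}]$, and it is consistent with how (vii) is used later (e.g.\ the term $\langle Id_\xi(\hr(\cx,\cy)\xi),Z\rangle_\xi$ in the Koszul computation). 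The only point worth spelling out slightly more is the vanishing of $\mathcal{V}[X,Y]$ at $O_b$: your argument that basic fields are tangent to the zero section because parallel transport is linear (hence fixes $0$) is right, but you should add that $T_{O_b}(\text{zero section})=\mathcal{H}_{O_b}$ for the same reason, so that tangency to the zero section at $O_b$ really forces the vertical part to vanish; after that, Lemma~\ref{linear_constant} gives linearity and (ii) pins down the associated endomorphism as $\hr(\cx,\cy)$.
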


\section{A family of connections on the manifold $M$}

\subsection{Definition}\label{connection_D}
We define a torsion-free connection $D$ on the bundle $TM$ by the
following equalities:

Fix a section $\hphi$ of $S^2(T^*B)\otimes End_B(M)$.

For $X,Y$ basic sections of $TM$ and $V,W$ constant vertical
sections of $TM$:

$$\begin{array}{l}
D_V W= 0\\
D_V X= 0\\
D_X V= [X,V]\\
D_X Y= \mathcal{H}D_X Y + \mathcal{V}D_X Y,\\
\end{array}$$
with $\mathcal{H}D_X Y$ basic $\pi$-related to $\nac_{\cx}\cy$ and
$\mathcal{V}D_X Y$ linear associated to
$\frac{1}{2}\hr(\cx,\cy)+\hphi(\cx,\cy)$.

\begin{exs}
\begin{itemize}
\item
$M$ is the vector bundle
$TB$ equipped with the connection $\nah$ equal to $\nac$ and $\hphi$ is chosen to be zero.

\item $M$ is the vector bundle $T^*B$ equipped with the dual connection $\nah$ of $\nac$.
Different choices are possible for $\hphi$. We will have a closer look at some possibility in the next paragraph.
\end{itemize}
\end{exs}

Define $\Gamma(\cx,\cy)(\cz):=\frac{1}{2}(\ccr(\cx,\cz)\cy+\ccr(\cy,\cz)\cx)$.
$\Gamma$ verifies the Bianchi-identity and is symmetric in the first two arguments.
Note that one can recover $\ccr$ from $\Gamma$: $\ccr(\cx,\cy)\cz=\frac{2}{3}(\Gamma(\cx,\cy)\cz-\Gamma(\cx,\cz)\cy)$.

\begin{rem} Note that more generally every $3$-tensor can canonically be decomposed in a totally antisymmetric, a totally symmetric, and a tensor verifying the Bianchi identity. Furthermore the vector space of $3$-tensors verifying the Bianchi identity is the direct sum of any two spaces from the set $\{\mathcal{BA}_{12},\mathcal{BA}_{13},\mathcal{BA}_{23},\mathcal{BS}_{12},\mathcal{BS}_{13},\mathcal{BS}_{23}\}$, where $\mathcal{BA}_{ij}$ (resp. $\mathcal{BS}_{ij}$) is the vector space of $3$-tensors verifying the Bianchi identity and antisymmetric (resp. symmetric) in the positions $i$ and $j$.
\end{rem}

\subsection{Structures on $T^*B$}
For $M=T^*B$ and $t\in\reels$, let $\Gamma^*(\cx,\cy)(\hv)(\cz):=-\hv(\Gamma(\cx,\cy)(\cz))$ and
$\hphi_{t} :=t\Gamma^*$.

\subsubsection{An example: a canonical pseudo-Riemannian structure on $T^*B$}
\label{cotangent} We can endow $M=T^*B$ with a
pseudo-Riemannian structure $\langle\cdot,\cdot\rangle$ in the
following way: For $V,W$ vertical sections of $TM$ associated to
$\hv$, $\hw$ and $X,Y$ basic sections of $TM$ $\pi$-related to
$\cx$, $\cy$ fix: $\langle V, W \rangle=0$,
 $\langle X, Y \rangle=0$, $\langle X, V \rangle_x= \hv(\cx)_{\pi(x)}$.

The Levi-Civita-connection $D$ corresponding to this
pseudo-Riemannian structure has the following properties:

\begin{prop}
$$\begin{array}{l}
D_V W= 0\\
D_V X= 0\\
D_X V= [X,V]\\
D_X Y= \mathcal{H}D_X Y + \mathcal{V}D_X Y,\\
\end{array}$$
with $\mathcal{H}D_X Y$ basic $\pi$-related to $\nac_{\cx}\cy$ and
$\mathcal{V}D_X Y$ linear and $(\widehat{\mathcal{V}D_X
Y})(\hv)(\cz)=\hv(\ccr(\cy,\cz)\cx)$.
\end{prop}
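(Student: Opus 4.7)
The plan is to prove that the connection $D$ in the statement is the Levi-Civita connection by verifying it is torsion-free and metric, and then invoking uniqueness. Torsion-freeness comes essentially for free by recognising $D$ as a member of the family constructed in Section \ref{connection_D}, so the real work is metric compatibility, which I will check on triples of basic and constant vertical fields (these span $TM$ locally, and the metricity identity is tensorial).

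To identify the relevant $\hphi$, I would rewrite $(\widehat{\mathcal V D_X Y})(\hv)(\cz) = \hv(\ccr(\cy,\cz)\cx)$ in the form $\bigl(\tfrac12\hr(\cx,\cy) + \hphi(\cx,\cy)\bigr)(\hv)(\cz)$. Using $(\hr(\cx,\cy)\hv)(\cz) = -\hv(\ccr(\cx,\cy)\cz)$ together with the first Bianchi identity for $\ccr$, a short manipulation gives $\hphi(\cx,\cy)(\hv)(\cz) = \hv(\Gamma(\cx,\cy)\cz) = -\Gamma^*(\cx,\cy)(\hv)(\cz)$, i.e.\ $\hphi = \hphi_{-1}$, so torsion-freeness of $D$ follows from Section \ref{connection_D}.

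For metric compatibility I would work through all cases of a triple of fields, each basic or constant vertical. Every case in which the differentiator is vertical, or both arguments of the inner product are of the same type, reduces to $0 = 0$ by a direct check using that $\langle V,W\rangle = 0$, $\langle X,Y\rangle = 0$, the function $\langle X,V\rangle$ is pulled back from $B$, and $D_V$ vanishes on basic and constant vertical fields. The two informative cases are: \emph{(a)} $X$, $Y$ basic and $V$ constant vertical, where $X\langle Y,V\rangle = \cx(\hv(\cy))$ equals $\hv(\nac_\cx\cy) + (\nac^*_\cx\hv)(\cy)$ by the defining identity of $\nac^*$, and this matches $\langle\mathcal H D_X Y,V\rangle + \langle Y,[X,V]\rangle$ by Proposition \ref{basic_results}; and \emph{(b)} $X$, $Y$, $Z$ all basic, where the identity reduces to $\langle\mathcal V D_X Y,Z\rangle + \langle Y,\mathcal V D_X Z\rangle = 0$. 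At $x\in F_b = T^*_b B$, under the identification $\mathcal V_x\simeq F_b$, this reads $x(\ccr(\cy,\cz)\cx) + x(\ccr(\cz,\cy)\cx) = 0$, which holds by antisymmetry of $\ccr$ in its first two arguments.

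The main bookkeeping obstacle is keeping track of the identification $\mathcal V_x \simeq F_b = T^*_b B$ when pairing a linear vertical field with a basic one: for a linear vertical $A$ associated to $\ha$, one must remember that $A_x$ corresponds, via this identification, to the covector $\ha_b(x)\in T^*_b B$. With that in hand the verification reduces to the two short computations above, combined with the Bianchi identity used to identify $\hphi$ and the antisymmetry of $\ccr$ used in case $(b)$.
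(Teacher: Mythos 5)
Your proof is correct, but it runs in the opposite direction from the paper's. The paper \emph{derives} the formulas from the metric via the Koszul formula: it plugs basic and constant vertical fields into $2\langle D_XY,Z\rangle = X\cdot\langle Y,Z\rangle+\dots$ and reads off each component, with the first Bianchi identity appearing at the very end to simplify $-\xi(\ccr(\cx,\cy)\cz)+\xi(\ccr(\cx,\cz)\cy)+\xi(\ccr(\cy,\cz)\cx)$ to $2\xi(\ccr(\cy,\cz)\cx)$. You instead take the stated formulas as a candidate connection, observe that it belongs to the torsion-free family of section~\ref{connection_D}, verify $Dg=0$ on the spanning set of basic and constant vertical fields, and invoke uniqueness of the Levi-Civita connection. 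Both are complete arguments of comparable length; the same two nontrivial facts (the defining identity of $\nac^*$ and the Bianchi identity/antisymmetry of $\ccr$) do the work in each, just distributed differently. What your route buys is the explicit identification of the $\hphi$ realizing this connection within the general family --- something the paper only records in the remark following the proof --- and a cleaner separation of the ``horizontal'' case (a) from the ``vertical'' case (b); what it costs is that it only verifies the answer rather than producing it. Two small points: your identification $\hphi(\cx,\cy)(\hv)(\cz)=\hv(\Gamma(\cx,\cy)\cz)$ agrees with the explicit formula in the paper's remark, and your label $\hphi_{-1}$ is the one consistent with the paper's definitions $\Gamma^*(\cx,\cy)(\hv)(\cz):=-\hv(\Gamma(\cx,\cy)\cz)$, $\hphi_t:=t\Gamma^*$ (the remark's label $\hphi_1$ carries a sign slip relative to its own explicit formula, so the discrepancy is the paper's, not yours); and for completeness you should note that $D_XV=[X,V]$ combined with $D_VX=0$ is exactly what torsion-freeness forces, so nothing further needs checking there.
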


\begin{proof}[Proof of the proposition] Use the Koszul formula:
\begin{eqnarray*}2 \langle D_X Y, Z \rangle & = & X\cdot \langle Y, Z \rangle +
Y\cdot \langle Z, X \rangle - Z\cdot \langle X, Y \rangle\\
& & +\langle [X,Y], Z \rangle - \langle [X,Z], Y \rangle - \langle
[Y,Z], X \rangle,
\end{eqnarray*}
for any sections $X, Y, Z$ of $TM$.

Let now $X,Y,Z$ be basic and $U, V, W$ constant. Using
proposition~\ref{basic_results} it is easy to see that $\langle D_V
W, U \rangle=0$ and $\langle D_V W, Z \rangle=0$, so $D_V W=0$.

We have also easily $\langle D_V X, U \rangle=0$. $\langle D_V X,
Y\rangle=0$ follows from the fact that $\nac$ is torsion-free. So
$D_V X=0$.

For $\xi\in T^*B$ we have
\begin{eqnarray*}2\langle D_X Y, U\rangle_\xi & = & (\cx \cdot
\hu(\cy))_{\pi(\xi)} + (\cy \cdot \hu(\cx))_{\pi(\xi)}\\
& & +(\hu([\cx,\cy]))_{\pi(\xi)}-((\nah_{\cx}\hu)(\cy))_{\pi(\xi)}-((\nah_{\cy}\hu)(\cx))_{\pi(\xi)}\\
& = & 2(\hu(\nac_{\cx}\cy))_{\pi(\xi)}
\end{eqnarray*}
From this follows that the horizontal part of $D_X Y$ is basic and
$\pi$-related to $\nac_{\cx}\cy$.

\begin{eqnarray*}2\langle D_X Y, Z\rangle_\xi & = & \langle [X,Y], Z \rangle_\xi - \langle [X,Z], Y \rangle_\xi - \langle
[Y,Z], X \rangle_\xi\\
& = & \langle Id_\xi(\hr(\cx,\cy)\xi), Z \rangle_\xi - \langle
Id_\xi(\hr(\cx,\cz)\xi), Y \rangle_\xi - \langle
Id_\xi(\hr(\cy,\cz)\xi), X
\rangle_\xi\\
& = & -\xi(\ccr(\cx,\cy)\cz)+ \xi(\ccr(\cx,\cz)\cy) +
\xi(\ccr(\cy,\cz)\cx)\\
& = & 2 \xi(\ccr(\cy,\cz)\cx),
\end{eqnarray*}
where the latter equality follows from the first Bianchi identity
for $\nac$. As a consequence the vertical part of $D_X Y$ is linear
and associated to the given section of $End_B(M)$.

\end{proof}

\begin{rem}
The connection defined in this section fits in the scheme described in section~\ref{connection_D}. We have:
$\hphi=\hphi_1$, or explicitly
$\hphi(\cx,\cy)(\hv)(\cz)=\frac{1}{2}((\hr(\cz,\cx)\hv)(\cy)+(\hr(\cz,\cy)\hv)(\cx))$,
\end{rem}

\subsubsection{A symplectic structure on $T^*B$}
We can endow $M=T^*B$ with an almost symplectic structure $\omega$ in the
following way: For $V,W$ vertical sections of $TM$ associated to
$\hv$, $\hw$ and $X,Y$ basic sections of $TM$ $\pi$-related to
$\cx$, $\cy$ fix: $\omega(V, W)=0$,
$\omega(X, Y)=0$, $\omega(V,X)_x=-\omega(X,V)_x= \hv(\cx)_{\pi(x)}$.

It is an easy verification (using basic resp. constant vertical vector fields, the fact that $\nac$ is torsion-free and applying the first Bianchi-identity for $\ccr$) that $d\omega=0$.

\begin{prop}
The given almost symplectic structure is a symplectic structure.
\end{prop}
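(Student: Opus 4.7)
By hypothesis $\omega$ is nondegenerate (in any case, in the splitting $T_xM=\mathcal{V}_x\oplus\mathcal{H}_x\simeq T^*_{\pi(x)}B\oplus T_{\pi(x)}B$ the form $\omega_x$ vanishes on each summand and restricts to the canonical duality pairing between them, which is obviously nondegenerate), so only $d\omega=0$ remains. The plan is to use the Cartan formula
\begin{equation*}
d\omega(E_1,E_2,E_3)=\sum_{\mathrm{cyc}}E_i\!\cdot\!\omega(E_{i+1},E_{i+2})-\sum_{\mathrm{cyc}}\omega([E_i,E_{i+1}],E_{i+2})
\end{equation*}
and to exploit $C^\infty(M)$-trilinearity of $d\omega$: it then suffices to evaluate on triples where each $E_i$ is either a basic vector field or a constant vertical vector field, since such fields span $T_xM$ at every point. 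This produces four cases, indexed by the number of vertical entries.

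The cases $(V,W,U)$ and $(X,V,W)$ are trivial: in both, every Lie bracket is vertical (Proposition~\ref{basic_results}(i) and (iv)), so each $\omega([\cdot,\cdot],\cdot)$-term pairs two verticals and vanishes, while the only nonzero $\omega$-values appearing in the derivative terms are of the form $\pm\hv(\cx)\circ\pi$, which are pulled back from $B$ and hence killed by vertical directional derivatives. The substantive mixed case is $(X,Y,V)$: the derivative terms are $-\cx\!\cdot\!\hv(\cy)$ and $\cy\!\cdot\!\hv(\cx)$ by $\pi$-relatedness; $[X,Y]$ contributes only through its horizontal part (its vertical part is linear, so it pairs trivially with $V$), yielding $-\hv([\cx,\cy])$; and $[Y,V]$, $[V,X]$ are constant vertical associated to $\nac^*_\cy\hv$ and $-\nac^*_\cx\hv$ by Proposition~\ref{basic_results}(iv). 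Expanding $(\nac^*_\cx\hv)(\cy)=\cx\!\cdot\!\hv(\cy)-\hv(\nac_\cx\cy)$ and collecting, everything collapses to $\hv\bigl([\cx,\cy]-\nac_\cx\cy+\nac_\cy\cx\bigr)=0$ by torsion-freeness of $\nac$.

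In the last case $(X,Y,Z)$ all derivative terms vanish because $\omega$ is zero on pairs of horizontal vectors, and the horizontal parts of the brackets contribute zero for the same reason. The only surviving contributions come from the vertical parts of $[X,Y]$, $[Y,Z]$, $[Z,X]$, which by Proposition~\ref{basic_results}(vii) are linear vertical associated to $\hr(\cx,\cy)$, $\hr(\cy,\cz)$, $\hr(\cz,\cx)$. Evaluating at $x\in T^*_bB$ via $(\hr(\cx,\cy)\xi)(\cz)=-\xi(\ccr(\cx,\cy)\cz)$, the three cyclic terms combine into $x\bigl(\ccr(\cx,\cy)\cz+\ccr(\cy,\cz)\cx+\ccr(\cz,\cx)\cy\bigr)$, which vanishes by the first Bianchi identity for the torsion-free connection $\nac$. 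The only real obstacle throughout is sign-and-bracket bookkeeping — telling horizontal from linear-vertical from constant-vertical at each step — and no further geometric input beyond Proposition~\ref{basic_results}, torsion-freeness of $\nac$, and the first Bianchi identity is needed.
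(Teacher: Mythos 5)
Your proof is correct and follows exactly the route the paper indicates (evaluate $d\omega$ on basic and constant vertical fields, use torsion-freeness of $\nac$ in the mixed case and the first Bianchi identity for $\ccr$ in the all-basic case); the case analysis, the identification of which brackets survive, and the two cancellations all check out. The only thing worth noting is that the paper's subsequent remark ($\omega$ is the differential of the Liouville form) would give closedness for free, but your direct verification is precisely the ``easy verification'' the authors had in mind.
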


\begin{rem}
It is an easy verification that the symplectic structure is simply the differential of the Liouville form on $T^*B$ and independent of the choice of the horizontal spaces.
\end{rem}

The following statement makes explicit under which condition a connection $D$ on the manifold $M=T^*B$ of the type discussed in this section preserves the symplectic structure:
\begin{prop}\label{symplectic_phi}
Let $\omega$ be a symplectic structure on the manifold $M=T^*B$ as fixed in this paragraph.
A connection $D$ of the type described in section \ref{connection_D} verifies $D\omega=0$ if and only if
for any vertical constant vector field $V$ and any basic vector fields $X, Y, Z$ we have:
$$\frac{1}{2}(\hr(\cy,\cz)\hv)(\cx)+(\hphi(\cx,\cz)\hv)(\cy)-(\hphi(\cx,\cy)\hv)(\cz)=0.$$
\end{prop}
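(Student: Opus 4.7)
The plan is to verify $D\omega=0$ by evaluating the $C^\infty(M)$-trilinear tensor
\[
(D_W\omega)(U_1,U_2) = W\cdot\omega(U_1,U_2) - \omega(D_WU_1,U_2) - \omega(U_1,D_WU_2)
\]
on a spanning family of vector fields on $M=T^*B$, namely the basic ones and the constant vertical ones, whose values jointly span $T_xM$ at every point. I enumerate the cases according to the type (basic or constant vertical) of each argument.

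Most cases vanish at once. If at least two of the three arguments are constant vertical, then $\omega$ vanishes on the two constant-vertical pairings and $D_V$ annihilates the third, so every summand is zero; for the triple $(V,X,Y)$ the form $\omega(X,Y)$ already vanishes and $D_V$ kills $X,Y$. The only nontrivial mixed case is $(X,V,Y)$: Proposition~\ref{basic_results}(iv) identifies $D_XV=[X,V]$ with the constant vertical field associated to $\nac^*_{\cx}\hv$, and the formula $(\nac^*_{\cx}\hv)(\cy)=\cx\cdot\hv(\cy)-\hv(\nac_{\cx}\cy)$, together with $\mathcal{H}D_XY$ being $\pi$-related to $\nac_{\cx}\cy$ and the vertical linear part of $D_XY$ pairing to zero with $V$, forces an exact cancellation.

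The only case producing a genuine obstruction is $(X,Y,Z)$ all basic. Since $\omega(Y,Z)=0$ and horizontal-horizontal pairings vanish, only the vertical linear parts of $D_XY$ and $D_XZ$ contribute. Identifying a linear vertical vector at $\xi\in T^*B$ with its value in the cotangent fibre $T^*_{\pi(\xi)}B$ and using the pairing $\omega(A,X)_\xi=(\ha_{\pi(\xi)}(\xi))(\cx_{\pi(\xi)})$, the requirement $(D_X\omega)(Y,Z)=0$ at $\xi$ becomes
\[
\tfrac{1}{2}(\hr(\cx,\cy)\xi)(\cz) + (\hphi(\cx,\cy)\xi)(\cz) - \tfrac{1}{2}(\hr(\cx,\cz)\xi)(\cy) - (\hphi(\cx,\cz)\xi)(\cy) = 0.
\]
Applying $(\hr(\cx,\cy)\xi)(\cz)=-\xi(\ccr(\cx,\cy)\cz)$ and then the first Bianchi identity $\ccr(\cx,\cy)\cz-\ccr(\cx,\cz)\cy=-\ccr(\cy,\cz)\cx$ collapses the two curvature terms into $\tfrac{1}{2}(\hr(\cy,\cz)\xi)(\cx)$, producing exactly the identity of the statement with $\hv$ in the role of $\xi$; as $\xi$ ranges over the whole cotangent fibre and every element there is realised as $\hv(b)$ for some constant vertical $V$, pointwise validity at each $\xi$ is equivalent to the tensorial identity as written.

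The main technical hurdle is the careful sign bookkeeping together with the identification between vertical tangent vectors at $\xi$ and elements of $T^*_{\pi(\xi)}B$; once this is fixed, each case reduces to a short manipulation, and the first Bianchi identity for $\ccr$ is the only nontrivial input beyond Proposition~\ref{basic_results}.
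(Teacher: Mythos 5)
Your proposal is correct and follows exactly the route the paper intends (its proof is the one-line instruction to make $D\omega=0$ explicit on basic and constant vertical fields): all mixed cases cancel, and the all-basic case reduces via $(\hr(\cx,\cy)\xi)(\cz)=-\xi(\ccr(\cx,\cy)\cz)$ and the first Bianchi identity to the stated condition. The only point worth making explicit is that in the cases where the differentiation direction is vertical you also need $V\cdot\omega(W,X)=0$, which holds because $\omega(W,X)=\hw(\cx)\circ\pi$ is constant along the fibres.
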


\begin{proof}
Make the condition $D\omega=0$ explicit using basic and vertical constant vector fields.
\end{proof}

\begin{prop}
$\hphi$ verifies the conditions of proposition \ref{symplectic_phi} if and only if
it is of the form $\hphi_{\frac{1}{3}}+ \hs$ where $(\hs(\cx,\cy)\hu)(\cz)=-\hu(\cs(\cx,\cy)\cz)$
and $\cs$ is in $\Gamma(S^3(T^*B)\otimes TB)$.
\end{prop}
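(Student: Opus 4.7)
The plan is to dualize the condition of Proposition~\ref{symplectic_phi} into an equation on a tensor on the base. Exactly as $\Gamma^*$ is built from $\Gamma$, there is a unique $\psi\in\Gamma(S^2(T^*B)\otimes End(TB))$ with $(\hphi(\cx,\cy)\hv)(\cz)=-\hv(\psi(\cx,\cy)\cz)$. Substituting this together with $(\hr(\cy,\cz)\hv)(\cx)=-\hv(\ccr(\cy,\cz)\cx)$ into the identity of Proposition~\ref{symplectic_phi} and dividing out the arbitrary $\hv$, that identity is equivalent to
\[
\psi(\cx,\cy)\cz-\psi(\cx,\cz)\cy=\tfrac{1}{2}\ccr(\cy,\cz)\cx. \qquad(\ast)
\]

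The next step is to exhibit $\psi_0:=\tfrac{1}{3}\Gamma$ as a particular solution of $(\ast)$. Expanding with the definition of $\Gamma$ and collecting terms,
\[
\psi_0(\cx,\cy)\cz-\psi_0(\cx,\cz)\cy-\tfrac{1}{2}\ccr(\cy,\cz)\cx
=-\tfrac{1}{6}\bigl(\ccr(\cx,\cy)\cz+\ccr(\cy,\cz)\cx+\ccr(\cz,\cx)\cy\bigr),
\]
which vanishes by the first Bianchi identity for the torsion-free connection $\nac$. On the cotangent side $\psi_0$ corresponds exactly to $\hphi_{1/3}$.

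Since $(\ast)$ is affine and its right-hand side is independent of $\psi$, its solution set is $\tfrac{1}{3}\Gamma+K$, where $K$ is the kernel of the linear map $\cs\mapsto\cs(\cx,\cy)\cz-\cs(\cx,\cz)\cy$ on sections of $S^2(T^*B)\otimes End(TB)$. The kernel equation $\cs(\cx,\cy)\cz=\cs(\cx,\cz)\cy$ expresses that $\cs$ is symmetric in the last two lower arguments; combined with its built-in symmetry in the first two, this forces $\cs$ to be totally symmetric, i.e.\ $\cs\in\Gamma(S^3(T^*B)\otimes TB)$. Dualizing back yields $\hphi=\hphi_{1/3}+\hs$ with $(\hs(\cx,\cy)\hu)(\cz)=-\hu(\cs(\cx,\cy)\cz)$, as required. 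The only computation of substance is the Bianchi cancellation above; the rest is symmetry bookkeeping.
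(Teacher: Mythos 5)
Your proof is correct and follows essentially the same route as the paper: verify that $\hphi_{\frac{1}{3}}$ is a particular solution via the first Bianchi identity, then observe that the difference of any two solutions is symmetric in the last two arguments and hence, being already symmetric in the first two, totally symmetric. Dualizing to a tensor $\psi$ on $TB$ rather than working with $\hs$ on $T^*B$ directly is only a cosmetic repackaging; the computation you carry out, including the coefficient $-\tfrac{1}{6}$ in front of the Bianchi sum, checks out.
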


\begin{proof}
Check that $\hphi_{\frac{1}{3}}$ satisfies the condition of proposition \ref{symplectic_phi} and that if $\hphi$ does, $\hs:=\hphi-\hphi_{\frac{1}{3}}$
verifies: $(\hs(\cx,\cy)\hu)\cz=(\hs(\cx,\cz)\hu)\cy$.
\end{proof}

Other interesting structures on $M=T^*B$ are obtained for $\hphi=\hphi_t$ when $t=0$ or $t=-1$.

\subsection{Structures on $TB$}
For $M=TB$ and $t\in\reels$, let $\hphi_{t} :=t\Gamma$. One can consider as before the cases $t=-1$, $t=0$, $t=1$ etc. We will see later that may be the structure corresponding to $t=1$ is most important.

After these examples we return to the general case.

\subsection{Further properties}

\begin{rem}
Note that from $D_V A - D_A V = [V,A]$ follows $D_V A = [V,A]$ for
$A$ linear and $V$ constant, as $D_A V=0$. Similarly $D_X A =
[X,A]$ for $X$ basic and $A$ linear vertical.
\end{rem}

\begin{prop}
For $A$ and $B$ linear vertical vector fields, $D_A B$ is linear
vertical associated to $\hb \circ \ha$.
\end{prop}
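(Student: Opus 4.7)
The plan is to mirror the approach in the proof of Lemma~\ref{linear_constant} and reduce the computation to a local trivialization of $M$. Over an open $U\subset B$ with $\pi^{-1}(U)\simeq U\times F$, the coordinate fields $V_i:=\partial/\partial x^i$ associated to linear fiber coordinates $(x^1,\dots,x^n)$ form a global frame of constant vertical fields, and the linear vertical fields $A,B$ become $A = a^i_j(b)\,x^j V_i$ and $B = b^i_j(b)\,x^j V_i$, with $(a^i_j)$, $(b^i_j)$ the matrices of $\ha$, $\hb$ at $b=\pi(x)$.

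From here I would use that the defining rules of $D$ restrict to $\pi^{-1}(U)$ unchanged — in particular $D_{V_i}V_k=0$ — and compute $D_A B$ by $C^\infty(M)$-linearity in the first slot and the Leibniz rule in the second. The only surviving contribution comes from $V_i$ acting on the fiber-linear coefficients of $B$: since $V_i$ is tangent to the fiber it annihilates every function pulled back from $B$ (and in particular the entries $b^k_l(b)$), while $V_i(x^l)=\delta^l_i$. A short bookkeeping then gives $D_A B = b^k_i(b)\,a^i_j(b)\,x^j V_k$, which is precisely the linear vertical field whose associated endomorphism has matrix $(b^k_i a^i_j) = \hb\circ\ha$.

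The only step that is not purely formal is the legitimacy of this trivialized computation, i.e.\ that the four defining rules of Section~\ref{connection_D} together with $C^\infty(M)$-linearity and the Leibniz rule determine $D$ uniquely on $\pi^{-1}(U)$. This is standard, since every vector field on $\pi^{-1}(U)$ is a $C^\infty(M)$-combination of the $V_i$ and of the basic lifts of a coordinate frame on $U$. A coordinate-free alternative would be to invoke Lemma~\ref{linear_constant}: verify first that $D_A B$ is vertical and vanishes on the zero section, then identify its endomorphism by computing $[D_A B, V]$ for every constant $V$ via the torsion-free identity $[D_A B, V] = D_{D_A B}V - D_V D_A B$ together with the formula $D_V A = [V, A]$ from the remark preceding the statement. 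This yields the same content at a higher notational cost, so I would stick with the trivialized computation.
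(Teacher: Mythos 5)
Your computation is correct, and it is exactly the argument the paper intends: the proposition is stated without proof, but the paper's pattern (cf.\ the proof of Lemma~\ref{linear_constant}, ``can be checked in a local trivialization'') is precisely the trivialized calculation you carry out, using $D_{V_i}V_k=0$, tensoriality in the first slot and the Leibniz rule, and your index bookkeeping correctly lands on $\hb\circ\ha$ with the right order of composition. Your side remark about uniqueness of $D$ is not actually needed for the proof --- you only use that $D$ is a connection satisfying the four defining identities, not that they pin it down --- but it does no harm.
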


The curvature tensor $R$ of the connection $D$ can be characterized
as follows:
\begin{prop}\label{curvature}
For $U, V$ vertical constant and $X, Y, Z$ basic we have:
\begin{enumerate}[(i)]
\item $R(U,V)=0$
\item $R(X,U)V=0$, $R(X,U)Y$ is vertical constant associated to
$\frac{1}{2}\hr(\cx,\cy)\hu+\hphi(\cx,\cy)\hu$.
\item $R(X,Y)U$ is vertical constant associated to
$\hr(\cx,\cy)\hu$.

\noindent $\mathcal{H}(R(X,Y)Z)$ is basic associated to
$\ccr(\cx,\cy)\cz$.

\noindent $\mathcal{V}(R(X,Y)Z)$ is linear and associated to
$\frac{1}{2}(\nah_{\cz}\hr)(\cx,\cy)
-(\nah_{\cx}\hphi)(\cy,\cz)+(\nah_{\cy}\hphi)(\cx,\cz).$
\end{enumerate}
\end{prop}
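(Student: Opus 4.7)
The plan is to apply the curvature identity $R(X,Y) = D_{[X,Y]} - [D_X,D_Y]$ to pairs of vector fields of the two special types (constant vertical and basic) and read off the answer by iterating Proposition~\ref{basic_results} together with the defining rules of Section~\ref{connection_D}. One preliminary observation makes everything run: since a linear vertical field $A$ is locally of the form $\sum f_i V_i$ with $V_i$ constant vertical, $D_A$ annihilates any constant vertical or basic field. Combined with the remark that $D_V A = [V,A]$ and $D_X A = [X,A]$ for linear vertical $A$, this produces every $D$-derivative needed below; tensoriality of $R$ in its three arguments then makes the case-by-case check exhaustive.

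For (i), $[U,V]=0$ by \ref{basic_results}(i) and $D_U$ kills both basic and constant vertical fields. For (ii), \ref{basic_results}(iv) shows $[X,U]$ is constant vertical, so $D_{[X,U]}$ vanishes on basic and constant vertical fields; hence $R(X,U)V = D_U D_X V = D_U[X,V] = 0$, and $R(X,U)Y = D_U D_X Y$ picks out only the linear vertical piece $A:=\mathcal V D_XY$. The remark gives $D_UA = [U,A]$, and \ref{basic_results}(ii) converts this into the constant vertical field associated to $\ha(\hu) = \frac{1}{2}\hr(\cx,\cy)\hu+\hphi(\cx,\cy)\hu$.

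For (iii), consider first $R(X,Y)U$: splitting $[X,Y] = \mathcal H[X,Y] + \mathcal V[X,Y]$ via \ref{basic_results}(vi)--(vii), the vertical piece kills $U$, while the horizontal piece yields $[\mathcal H[X,Y],U]$, constant vertical associated to $\nah_{[\cx,\cy]}\hu$ by \ref{basic_results}(iv). Iterating \ref{basic_results}(iv) on $-[D_X,D_Y]U$ produces the constant vertical associated to $-\nah_{\cx}\nah_{\cy}\hu+\nah_{\cy}\nah_{\cx}\hu$, and the sum is $\hr(\cx,\cy)\hu$ by the definition of $\hr$. For $\mathcal H(R(X,Y)Z)$, only the rule $\mathcal HD_XY \sim \nac_{\cx}\cy$ can feed a horizontal term into a second $D$-derivative, so the analogous three-line computation with $\nac$ in place of $\nah$ gives $\ccr(\cx,\cy)\cz$.

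The main obstacle is $\mathcal V(R(X,Y)Z)$. Contributions come from $D_{\mathcal H[X,Y]}Z$ (giving the linear vertical associated to $\frac{1}{2}\hr([\cx,\cy],\cz)+\hphi([\cx,\cy],\cz)$), from the horizontal pieces $D_X(\mathcal HD_YZ)$ and $D_Y(\mathcal HD_XZ)$ (contributing $\frac{1}{2}\hr(\cx,\nac_{\cy}\cz)+\hphi(\cx,\nac_{\cy}\cz)$ and its mirror), and from the vertical pieces $D_X(\mathcal VD_YZ) = [X,\mathcal VD_YZ]$, evaluated by \ref{basic_results}(v) to produce $\nah_{\cx}(\frac{1}{2}\hr(\cy,\cz)+\hphi(\cy,\cz))$ and its mirror. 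I then rewrite each $\nah$ applied to a tensor value as the tensor derivative plus the usual two correction terms. Using the torsion-free identity $[\cx,\cy]=\nac_{\cx}\cy-\nac_{\cy}\cx$, the $\hphi$-contributions collapse to $-(\nah_{\cx}\hphi)(\cy,\cz)+(\nah_{\cy}\hphi)(\cx,\cz)$, while the $\hr$-contributions collapse to $(\nah_{\cy}\hr)(\cx,\cz)-(\nah_{\cx}\hr)(\cy,\cz)$, which by the second Bianchi identity for $\nah$ (valid because $\nac$ is torsion-free) equals $(\nah_{\cz}\hr)(\cx,\cy)$; together with the $\frac{1}{2}$ prefactor this gives the claimed formula.
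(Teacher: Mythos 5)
Your proof is correct. The paper itself omits the proof of this proposition (the introduction defers it to a companion paper), but your direct computation from $R(X,Y)=D_{[X,Y]}-[D_X,D_Y]$, using tensoriality to reduce to basic and constant vertical arguments, the extension of $D_V W=D_V X=0$ to arbitrary vertical lower arguments by $C^\infty$-linearity, and the second Bianchi identity for $\hr$ to convert $-(\nah_{\cx}\hr)(\cy,\cz)+(\nah_{\cy}\hr)(\cx,\cz)$ into $(\nah_{\cz}\hr)(\cx,\cy)$, is exactly the intended argument and checks out, including the cancellation of the $\hpsi(\cdot,\nac\cdot)$ correction terms against the torsion-free identity. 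The only blemish is cosmetic: in the last paragraph you list the contributions of $D_X(\mathcal{H}D_YZ)$ and $D_X(\mathcal{V}D_YZ)$ without the minus sign they carry inside $-D_XD_YZ$, though your final collapse uses the correct signs.
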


\begin{corr} In the case $M=T^*B$ equipped with the pseudo-riemannian structure of section~\ref{cotangent} for $U, V$ vertical constant and $X, Y, Z$ basic we have:
\begin{enumerate}
\item[(ii)] $\widehat{R(X,U)Y}(\cz)=\hu(\ccr(\cy,\cz)\cx)$.

\item[(iii)] if $A=\mathcal{V}(R(X,Y)Z)$,
$\ha(\hv)(\ct)=((\nah_{\ct} \hr)(\cx,\cy)\hv)(\cz)$.
\end{enumerate}
\end{corr}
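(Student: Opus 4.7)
The plan is to specialize the general curvature formulas in Proposition~\ref{curvature} to the case $M = T^*B$ with $\hphi = \hphi_1$, where the explicit form $\hphi(\cx,\cy)(\hv)(\cz) = \frac{1}{2}((\hr(\cz,\cx)\hv)(\cy)+(\hr(\cz,\cy)\hv)(\cx))$ is recorded in the remark following Section~\ref{cotangent}. The key tool is to translate every occurrence of $\hr$ and $\nah\hr$ back to $\ccr$ and $\nac\ccr$ via the dual-connection identities $(\hr(\cx,\cy)\xi)(\cz) = -\xi(\ccr(\cx,\cy)\cz)$ and, by applying the Leibniz rule of the dual connection to both sides, $((\nah_\cw\hr)(\cx,\cy)\xi)(\cz) = -\xi((\nac_\cw\ccr)(\cx,\cy)\cz)$.

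For part~(ii), I first recall that Proposition~\ref{curvature}(ii) gives $\widehat{R(X,U)Y} = \frac{1}{2}\hr(\cx,\cy)\hu + \hphi(\cx,\cy)\hu$. Evaluating on $\cz$ and applying the first duality converts the result into $-\frac{1}{2}\hu(\ccr(\cx,\cy)\cz + \ccr(\cz,\cx)\cy + \ccr(\cz,\cy)\cx)$. Using the antisymmetry $\ccr(\cz,\cy)\cx = -\ccr(\cy,\cz)\cx$ and the first Bianchi identity $\ccr(\cx,\cy)\cz + \ccr(\cy,\cz)\cx + \ccr(\cz,\cx)\cy = 0$ to rewrite the first two terms as $-\ccr(\cy,\cz)\cx$ collapses the bracket to $-2\ccr(\cy,\cz)\cx$, yielding the claimed $\hu(\ccr(\cy,\cz)\cx)$.

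For part~(iii), I substitute $\hphi_1$ into the expression $\frac{1}{2}(\nah_\cz\hr)(\cx,\cy) - (\nah_\cx\hphi)(\cy,\cz) + (\nah_\cy\hphi)(\cx,\cz)$. Since $\hphi_1$ is built tensorially from $\hr$, its covariant derivative is built from $\nah\hr$ by the Leibniz rule, so after evaluating on $\hv$ and at $\ct$ and applying the second duality the whole expression becomes a combination of terms of the form $\hv((\nac_?\ccr)(?,?)?)$. The target $((\nah_\ct\hr)(\cx,\cy)\hv)(\cz) = -\hv((\nac_\ct\ccr)(\cx,\cy)\cz)$ is then to be reached by rearranging these terms through the second Bianchi identity $(\nac_\cu\ccr)(\cv,\cw)+(\nac_\cv\ccr)(\cw,\cu)+(\nac_\cw\ccr)(\cu,\cv)=0$ together with the differentiated first Bianchi identity $(\nac_\cw\ccr)(\cx,\cy)\cz + (\nac_\cw\ccr)(\cy,\cz)\cx + (\nac_\cw\ccr)(\cz,\cx)\cy = 0$.

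The main obstacle is the combinatorial bookkeeping: after substitution one has several interlocking terms $(\nac_?\ccr)(?,?)?$ and must choose the right order of application of the two Bianchi identities to collapse them into the single term $-\hv((\nac_\ct\ccr)(\cx,\cy)\cz)$. No single manipulation is subtle, but keeping track of signs and of the pair of indices on which each Bianchi identity acts is the only real difficulty.
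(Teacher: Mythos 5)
Your part~(ii) is fine: substituting the explicit $\hphi$ into Proposition~\ref{curvature}(ii), dualizing via $(\hr(\cx,\cy)\xi)(\cz)=-\xi(\ccr(\cx,\cy)\cz)$ and applying the first Bianchi identity does yield $\hu(\ccr(\cy,\cz)\cx)$, which is exactly the intended substitution argument.

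The gap is in part~(iii), precisely in the step you postpone as ``combinatorial bookkeeping'': the asserted collapse to the single term $-\hv((\nac_\ct\ccr)(\cx,\cy)\cz)$ cannot be carried out. If you actually do the computation, using $(\hpsi(\cx,\cy)\hv)(\cz)=\hv(\ccr(\cy,\cz)\cx)$ for the Levi-Civita case (equivalently your $\hphi_1$), Proposition~\ref{curvature}(iii) gives
\begin{equation*}
\ha(\hv)(\ct)=-\hv\bigl((\nac_{\cx}\ccr)(\cz,\ct)\cy\bigr)+\hv\bigl((\nac_{\cy}\ccr)(\cz,\ct)\cx\bigr),
\end{equation*}
and applying the second Bianchi identity (to trade the derivative directions $\cx,\cy$ for $\cz,\ct$) followed by the differentiated first Bianchi identity collapses this to the \emph{two}-term expression
\begin{equation*}
\ha(\hv)(\ct)=\hv\bigl((\nac_{\ct}\ccr)(\cx,\cy)\cz-(\nac_{\cz}\ccr)(\cx,\cy)\ct\bigr)
=((\nah_{\cz}\hr)(\cx,\cy)\hv)(\ct)-((\nah_{\ct}\hr)(\cx,\cy)\hv)(\cz),
\end{equation*}
which is antisymmetric under $\cz\leftrightarrow\ct$. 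No further Bianchi manipulation can remove the extra term: since $D$ is metric, $\langle R(X,Y)Z,T\rangle=-\langle R(X,Y)T,Z\rangle$, so $\ha(\hv)(\ct)$ must change sign when $\cz$ and $\ct$ are exchanged, and the one-term expression $-\hv((\nac_\ct\ccr)(\cx,\cy)\cz)$ does not have this property unless $\nac\ccr$ is special. A concrete test: on a surface with the single Christoffel symbol $\Gamma^1_{22}=f$, taking $\cx=\cz=\partial_1$, $\cy=\ct=\partial_2$, the two sides differ by $(\partial_1\partial_1 f)\,\hv(\partial_1)$. So your plan, as written, would either prove a formula with the additional term $((\nah_{\cz}\hr)(\cx,\cy)\hv)(\ct)$ (note the statement as printed appears to need this correction, being otherwise incompatible with metric skew-symmetry), or it simply does not close; the deferred bookkeeping is exactly where the argument fails.
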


Note $\hpsi:=\frac{1}{2}\hr + \hphi$

\begin{lem}
For $U,V$ vertical constant, $A$ linear and $X, Y, Z$ basic we have:
\begin{enumerate}[(i)]

\item $R(A,U)V=0$, $R(A,U)Z=0$;
\item $R(U,V)A=0$;
\item $R(X,A)U=0$, $R(X,A)Z$ is vertical linear associated to $\hpsi(\cx,\cz)
\circ \ha$;
\item $R(X,U)A=0$;
\item $R(X,Y)A$ is vertical linear associated to $\hr(\cx,\cy)\circ
\ha$.
\end{enumerate}
\end{lem}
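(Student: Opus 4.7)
The plan is to apply the curvature formula $R(X,Y)=D_{[X,Y]}-[D_X,D_Y]$ (the same convention used for $\ccr$) to each of the five pairs, then expand with the bracket rules of Proposition~\ref{basic_results} and the connection rules of Section~\ref{connection_D}. Before starting the case analysis, I would record two auxiliary identities used throughout. The first, $D_A V=0$ for $A$ linear and $V$ constant, is already stated. The second, $D_A Z=0$ for $A$ linear and $Z$ basic, follows from torsion-freeness: $D_A Z=D_Z A+[A,Z]=[Z,A]-[Z,A]=0$, using $D_Z A=[Z,A]$ from the preceding remark.

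Parts (i), (ii), and the \emph{first} halves of (iii)--(iv) are vanishing computations. In $R(A,U)V$ and $R(A,U)Z$ the bracket $[A,U]$ is vertical constant, while $D_A$ and $D_U$ annihilate every one of $V$, $Z$, and every vertical constant field, so each term vanishes. In $R(U,V)A$ we have $[U,V]=0$ and both iterated derivatives kill a vertical constant field. In $R(X,A)U$, the field $[X,A]$ is linear so acts trivially on $U$, $D_A U=0$, and $D_A D_X U$ reduces to $D_A$ of a vertical constant, hence zero.

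Case (iv), $R(X,U)A$, is the most delicate of the vanishing cases. The three surviving contributions are vertical constant fields associated respectively to $\ha(\nah_\cx\hu)$, $\nah_\cx(\ha(\hu))$, and $(\nah_\cx\ha)(\hu)$; they cancel by the very definition of the induced connection on $End_B(M)$, namely $(\nah_\cx\ha)(\hu)=\nah_\cx(\ha(\hu))-\ha(\nah_\cx\hu)$. For the second half of (iii), the expression $R(X,A)Z=D_{[X,A]}Z-D_X D_A Z+D_A D_X Z$ collapses to $D_A D_X Z$ alone, since $D_A Z=0$ and $[X,A]$ is linear (so $D_{[X,A]}Z=0$ too). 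The surviving $D_A$ annihilates the basic horizontal part of $D_X Z$ and, by the formula $D_A B=$ (linear associated to $\hb\circ\ha$) from the preceding proposition, turns the vertical part (linear associated to $\hpsi(\cx,\cz)$) into precisely $\hpsi(\cx,\cz)\circ\ha$.

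The substantive computation is (v). Splitting $[X,Y]=\mathcal H[X,Y]+\mathcal V[X,Y]$, the horizontal piece contributes $D_{\mathcal H[X,Y]}A=[\mathcal H[X,Y],A]$, linear associated to $\nah_{[\cx,\cy]}\ha$, while the vertical piece $\mathcal V[X,Y]$ is itself linear associated to $\hr(\cx,\cy)$ and, again by the $D_A B$ rule, contributes a term linear associated to $\ha\circ\hr(\cx,\cy)$. The two iterated terms $-D_X D_Y A+D_Y D_X A$ unravel via $D_X A=[X,A]$ etc.\ into a linear field associated to $-\nah_\cx\nah_\cy\ha+\nah_\cy\nah_\cx\ha$. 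Collecting all the $\nah$-terms gives the curvature of the induced connection on $End_B(M)$ applied to $\ha$, namely $[\hr(\cx,\cy),\ha]=\hr(\cx,\cy)\circ\ha-\ha\circ\hr(\cx,\cy)$; this cancels exactly the earlier $\ha\circ\hr(\cx,\cy)$ contribution, leaving $\hr(\cx,\cy)\circ\ha$. The main obstacle is purely bookkeeping: keeping signs straight and recognizing the telescoping of $\nah_{[\cx,\cy]}-[\nah_\cx,\nah_\cy]$ acting on the endomorphism $\ha$ as the $End$-curvature $[\hr(\cx,\cy),\ha]$. Once that identification is made, all five cases follow.
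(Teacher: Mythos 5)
Your computation is correct and is exactly the direct verification the paper intends (the lemma is one of the results whose proof the paper omits): expand $R=D_{[\cdot,\cdot]}-[D_\cdot,D_\cdot]$ with the same sign convention as $\ccr$, and use Proposition~\ref{basic_results}, the defining relations of $D$, the remark $D_XA=[X,A]$, $D_VA=[V,A]$, and the rule $D_AB$ associated to $\hb\circ\ha$. All five cases, including the Leibniz cancellation in (iv) and the identification of the $\nah$-terms in (v) with $[\hr(\cx,\cy),\ha]$, check out with the stated signs.
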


\begin{prop}
For $U, V, W$ vertical constant and $T, X, Y, Z$ basic we have:
\begin{enumerate}[(i)]
\item $(D R)(W,U,V)=0$;
\item $(D R)(X,U,V)=0$;
\item $(D R)(W,X,U)=0$;
\item $(D R)(X,Y,U)(V)=0$, $(D R)(X,Y,U)(Z)$ is vertical constant associated to $(\nah
\hpsi)(\cx,\cy,\cz)\hu$;
\item $(D R)(W,X,Y)(V)=0$; $(D R)(W,X,Y)(Z)$ is vertical constant
associated to
$-(\nah\hpsi)(\cx,\cy,\cz)\hw+(\nah\hpsi)(\cy,\cx,\cz)\hw$;
\item $(D R)(T,X,Y)(V)$ is vertical constant associated to
$(\nah \hr)(\ct,\cx,\cy)\hv$. $\mathcal{H}(D R)(T,X,Y)(Z)$ is basic
$\pi$-related to $(\nac_{\ct}\ccr)(\cx,\cy)\cz$. $\mathcal{V}(D
R)(T,X,Y)(Z)$ is linear associated to $-(\nah \nah
\hpsi)(\ct,\cx,\cy,\cz)+(\nah \nah
\hpsi)(\ct,\cy,\cx,\cz)+\hpsi(\ct,\ccr(\cx,\cy)\cz)+\hpsi(\cy,\cz)\circ
\hpsi(\ct,\cx)-\hpsi(\cx,\cz)\circ \hpsi(\ct,\cy)-\hr(\cx,\cy)\circ
\hpsi(\ct,\cz)$.
\end{enumerate}
\end{prop}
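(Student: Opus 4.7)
The plan is to compute each expression $(DR)(X_1,X_2,X_3)(X_4)$ by expanding the definition
\begin{align*}
(DR)(X_1,X_2,X_3)(X_4) &= D_{X_1}\!\left(R(X_2,X_3)X_4\right) - R(D_{X_1}X_2,X_3)X_4 \\
&\quad - R(X_2,D_{X_1}X_3)X_4 - R(X_2,X_3)D_{X_1}X_4,
\end{align*}
and then substituting the formulas for $R$ from Proposition \ref{curvature} and the subsequent lemma together with the definition of $D$ and the Leibniz-type identities $[X,V]=D_XV$, $[X,A]=D_XA$, and $[X,Y]=D_XY - D_YX$ modulo the vertical correction $\mathcal{V}[X,Y]$ associated to $\hr(\cx,\cy)$.

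For parts (i)--(v) the computation collapses quickly. Whenever $X_1$ is a vertical constant $W$, note that $D_W$ annihilates every basic and every constant vertical field, and sends a linear $A$ to $-[A,W]$ which is constant associated to $\ha(\hw)$; combined with the vanishings $R(U,V)=0$, $R(X,U)V=0$, $R(A,U)\cdot=0$, $R(X,U)A=0$ most terms disappear. In (iv), for example, the only surviving contributions when applied to $Z$ are $D_X(R(Y,U)Z)$, $R(D_XY,U)Z$, $R(Y,[X,U])Z$ and $R(Y,U)\mathcal{H}D_XZ$; writing $R(Y,U)Z$ as the constant vertical associated to $\hpsi(\cy,\cz)\hu$ and using Proposition \ref{basic_results}(iv) for brackets $[X,\cdot]$, these four terms recombine into exactly $(\nah_{\cx}\hpsi)(\cy,\cz)\hu$. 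Parts (ii), (iii) and (v) are of the same nature, and in (v) the antisymmetric combination $-(\nah\hpsi)(\cx,\cy,\cz)\hw+(\nah\hpsi)(\cy,\cx,\cz)\hw$ arises from interchanging $X$ and $Y$ through the bracket $[W,X]=0$ versus the structural role of $Y$ as a basic field in $R(X,Y)W$.

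The main work is part (vi). The strategy is to split $R(X,Y)Z = \mathcal{H}R(X,Y)Z + \mathcal{V}R(X,Y)Z$, apply $D_T$ separately to each piece, and carefully identify where each summand of the final answer originates. The term $(\nac_{\ct}\ccr)(\cx,\cy)\cz$ is just the horizontal image of $D_T$ applied to the basic field $\mathcal{H}R(X,Y)Z$ (using $\mathcal{H}D_TZ$ basic $\pi$-related to $\nac_\ct\cz$). The term $\hpsi(\ct,\ccr(\cx,\cy)\cz)$ arises as the vertical linear part of $D_T$ applied to that same basic field. The second-order term $-(\nah\nah\hpsi)(\ct,\cx,\cy,\cz)+(\nah\nah\hpsi)(\ct,\cy,\cx,\cz)$ comes from $D_T$ applied to the linear vertical piece $\mathcal{V}R(X,Y)Z$ together with the correction $-R(\mathcal{H}D_TX,Y)Z - R(X,\mathcal{H}D_TY)Z - R(X,Y)\mathcal{H}D_TZ$, after recognizing the expression as the covariant Hessian of $\hpsi$ (the symmetry-breaking between $\cx$ and $\cy$ is built into the original formula for $\mathcal{V}R$). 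Finally, the quadratic terms $\hpsi(\cy,\cz)\circ\hpsi(\ct,\cx) - \hpsi(\cx,\cz)\circ\hpsi(\ct,\cy) - \hr(\cx,\cy)\circ\hpsi(\ct,\cz)$ are produced by the three contractions with the linear parts $\mathcal{V}D_TX$, $\mathcal{V}D_TY$ and $\mathcal{V}D_TZ$, evaluated using the lemma (items (iii) and (v)) which give $R(Y,A)Z$ linear associated to $\hpsi(\cy,\cz)\circ\ha$ and $R(X,Y)A$ linear associated to $\hr(\cx,\cy)\circ\ha$.

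The hard part will be bookkeeping: organising the expansion so that each of the six summands of the final vertical expression is produced exactly once, with the right sign, from the correct contribution, and verifying that the residual pieces cancel. No deep identity beyond the Leibniz rule for $\nah$, the definition $\hpsi=\tfrac12\hr+\hphi$, and the structural formulas of Proposition \ref{basic_results} is needed; the second Bianchi identity for $\ccr$ is implicitly used only insofar as it guarantees the formula for $\mathcal{V}R(X,Y)Z$ in Proposition \ref{curvature} is already symmetric in the appropriate sense. The calculation is long but entirely mechanical once the split between horizontal and vertical parts of each intermediate expression is maintained rigorously.
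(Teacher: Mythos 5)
Your overall strategy---expanding $(DR)(X_1,X_2,X_3)(X_4)$ term by term and feeding in Proposition~\ref{curvature}, the lemma on $R$ applied to linear fields, and Proposition~\ref{basic_results}---is the natural one (the paper gives no proof of this proposition to compare with), and your accounting of the horizontal term, the quadratic terms and the term $\hpsi(\ct,\ccr(\cx,\cy)\cz)$ in (vi), as well as parts (i)--(iv), is essentially correct. The genuine gap is in your claim that ``no deep identity beyond the Leibniz rule'' is needed, and in your explanation of (v). Since $D_W$ annihilates basic fields and $D_WX=D_WY=D_WZ=0$, part (v) reduces entirely to $D_W\bigl(\mathcal{V}R(X,Y)Z\bigr)$, i.e.\ the constant vertical field associated to the endomorphism of Proposition~\ref{curvature}(iii) applied to $\hw$, namely $\bigl(\tfrac12(\nah_{\cz}\hr)(\cx,\cy)-(\nah_{\cx}\hphi)(\cy,\cz)+(\nah_{\cy}\hphi)(\cx,\cz)\bigr)\hw$; there is no ``interchange of $X$ and $Y$ through the bracket $[W,X]=0$'' producing the stated form. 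To identify this with $-(\nah_{\cx}\hpsi)(\cy,\cz)\hw+(\nah_{\cy}\hpsi)(\cx,\cz)\hw$ you must rewrite $\tfrac12(\nah_{\cz}\hr)(\cx,\cy)$ as $-\tfrac12(\nah_{\cx}\hr)(\cy,\cz)+\tfrac12(\nah_{\cy}\hr)(\cx,\cz)$, which is precisely the second (differential) Bianchi identity for the curvature $\hr$ of the bundle connection $\nah$---not the Bianchi identity for $\ccr$ that you invoke, and not something your bracket bookkeeping supplies. The same issue propagates into (vi): the second-order terms $-(\nah\nah\hpsi)(\ct,\cx,\cy,\cz)+(\nah\nah\hpsi)(\ct,\cy,\cx,\cz)$ do \emph{not} come out of the formula of Proposition~\ref{curvature}(iii) as it stands (whose $\hr$-part is differentiated in the direction $\cz$, not antisymmetrized in $\cx,\cy$); only after rewriting $\mathcal{V}R(X,Y)Z$ as linear associated to $-(\nah_{\cx}\hpsi)(\cy,\cz)+(\nah_{\cy}\hpsi)(\cx,\cz)$, again via Bianchi~II for $\hr$, does your Leibniz-plus-correction mechanism yield the stated covariant Hessian of $\hpsi$ after the first-order terms cancel.

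The repair is short: either record once the identity $(\nah_{\cx}\hr)(\cy,\cz)+(\nah_{\cy}\hr)(\cz,\cx)+(\nah_{\cz}\hr)(\cx,\cy)=0$ and use it to put $\mathcal{V}R(X,Y)Z$ in the $\hpsi$-antisymmetrized form before differentiating, or obtain (v) directly from (iv) via the second Bianchi identity for the torsion-free connection $D$, which gives $(DR)(W,X,Y)(Z)=-(DR)(X,Y,W)(Z)-(DR)(Y,W,X)(Z)$ and hence the stated expression immediately. With that single supplement, your plan goes through as described.
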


\subsection{Symmetric spaces}\label{ss}
\begin{prop}
In the case $M=T^*B$ equipped with the pseudo-riemannian structure of section~\ref{cotangent} ({\em i.e.} for $\hphi=\hphi_{1}$), $\nac \ccr=0$ implies $D R=0$. Further if $B$ is a symmetric space, then $T^*B$ with this structure is a symmetric space as well.
\end{prop}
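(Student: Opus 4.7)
The strategy is to apply the formulas of the preceding proposition on $DR$ component by component and to reduce the entire statement to a single algebraic identity. In the present setting $\hpsi=\frac{1}{2}\hr+\hphi_{1}$, and both pieces are algebraic in $\ccr$: the dual relation $(\hr(\cx,\cy)\xi)(\cz)=-\xi(\ccr(\cx,\cy)\cz)$ realises $\hr$ as the dual endomorphism of $\ccr$, while $\hphi_{1}=\Gamma^{*}$ is a fixed contraction built from $\ccr$. Consequently the hypothesis $\nac\ccr=0$ forces $\nah\hr=0$ and $\nah\hphi_{1}=0$, hence $\nah\hpsi=0$ and in particular $\nah\nah\hpsi=0$. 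With these vanishings, components (i)--(v) of the preceding proposition vanish immediately, as do the basic horizontal part of (vi), which is $\pi$-related to $(\nac_{\ct}\ccr)(\cx,\cy)\cz$, and the constant vertical part of (vi), associated to $(\nah\hr)(\ct,\cx,\cy)\hv$.

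The only nontrivial piece left is the linear vertical part of $(DR)(T,X,Y)(Z)$. After the second-derivative terms are erased, one must verify the pointwise algebraic identity
$$\hpsi(\ct,\ccr(\cx,\cy)\cz)+\hpsi(\cy,\cz)\circ\hpsi(\ct,\cx)-\hpsi(\cx,\cz)\circ\hpsi(\ct,\cy)-\hr(\cx,\cy)\circ\hpsi(\ct,\cz)=0.$$
This is the main obstacle. Using the compact formula $\hpsi(\cx,\cy)(\hv)(\cz)=\hv(\ccr(\cy,\cz)\cx)$, which can be read off the proposition describing the Levi-Civita vertical component in section~\ref{cotangent}, every composition becomes a doubly nested expression in $\ccr$. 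Evaluating on a constant vertical $\hv$ and testing against a vector $\check{W}$ transforms the identity into
$$\ccr(\ccr(\cx,\cy)\cz,\check{W})\ct+\ccr(\cx,\ccr(\cz,\check{W})\cy)\ct-\ccr(\cy,\ccr(\cz,\check{W})\cx)\ct+\ccr(\cz,\ccr(\cx,\cy)\check{W})\ct=0,$$
which says exactly that $\ccr(\cx,\cy)$ acts as a derivation of the tensor $\ccr$. This derivation property is the classical consequence of $\nac\ccr=0$ obtained by commuting two covariant derivatives and using the Ricci identity. Applying the derivation identity once to pair the outer two terms, and a second time (together with the antisymmetry of $\ccr$ in its first two slots) to pair the inner two, all four terms cancel.

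For the second statement, $B$ symmetric gives $\nac\ccr=0$ and hence $DR=0$ on $T^{*}B$ by the first part, so $T^{*}B$ is locally symmetric. Completeness of $T^{*}B$ follows from completeness of $B$ together with the structure of $D$: vertical geodesics are straight lines in the flat fibers, while horizontal geodesics project to complete base geodesics via the totally geodesic projection $\pi$, so every geodesic extends indefinitely. Global geodesic symmetries at a point $\xi\in T^{*}_{b}B$ are then obtained by composing the canonical lift to $T^{*}B$ of the symmetry $s_{b}$ of $B$ with the fiberwise reflection $\eta\mapsto 2\xi-\eta$ in $T^{*}_{b}B$; this produces an involutive affine isometry fixing $\xi$ with differential $-\mathrm{Id}$, so $T^{*}B$ is a pseudo-Riemannian symmetric space.
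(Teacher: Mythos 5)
Your handling of the first assertion is correct, and it is the natural argument from the formulas the paper provides (the paper itself omits the proof of this proposition): with $\hphi=\hphi_1$ one has $\hpsi(\cx,\cy)(\hv)(\cz)=\hv(\ccr(\cy,\cz)\cx)$, the hypothesis $\nac\ccr=0$ kills all terms of the $DR$-proposition involving $\nah\hr$, $\nah\hpsi$, $\nah\nah\hpsi$ or $\nac_{\ct}\ccr$, and the surviving linear vertical part of $(DR)(T,X,Y)(Z)$ dualizes exactly to the four-term expression you display. Your cancellation mechanism is right, with one wording caveat: that identity is not literally the derivation identity; rather, the outer pair combines to $[\ccr(\cx,\cy),\ccr(\cz,\check{W})]\ct$ by the derivation property of $\ccr(\cx,\cy)$, and the inner pair combines (using antisymmetry in the first two slots) to the opposite commutator by the derivation property of $\ccr(\cz,\check{W})$; the four terms cancel in these two blocks, not individually.

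The genuine gap is in the second assertion. Since $DR=0$ plus completeness does not imply global symmetry (complete locally symmetric spaces need not be symmetric), everything rests on your explicit symmetry at $\xi\in T^*_bB$, and as written it is not a map of $T^*B$: the ``fiberwise reflection $\eta\mapsto 2\xi-\eta$'' is defined only on the single fiber $T^*_bB$, so its composition with the cotangent lift of $s_b$ is not a global transformation. To globalize it you must compose with the fiberwise translation by a globally defined $1$-form $\alpha$ with $\alpha_b=2\xi$; a direct check with the metric of section~\ref{cotangent} shows that such a translation is an isometry iff the symmetric part of $\nac\alpha$ vanishes identically, and that the composed map has differential $-\mathrm{Id}$ at $\xi$ (rather than $-\mathrm{Id}$ plus a vertical shear on horizontal vectors) only if in addition $(\nac\alpha)_b=0$. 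The existence of such an $\alpha$ for every $b$ and $\xi$ is precisely the missing ingredient: it holds, for instance, when $B$ is a pseudo-Riemannian symmetric space (take $\alpha$ dual to the transvection Killing field whose value at $b$ is determined by $2\xi$), but for a general affine symmetric base it must be proved, and you neither state nor verify it; the involutivity and isometry of the composite are likewise asserted rather than checked. Secondarily, your completeness argument treats only vertical and horizontal geodesics, while a generic geodesic is neither; the correct quick argument is that its projection is a $\nac$-geodesic of the complete base and its vertical component then satisfies a linear equation along that geodesic, as in section~\ref{pt}.
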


\begin{prop}
In the case $M=TB$ equipped with the connection corresponding to $\hphi=\hphi_{1}$, $\nac \ccr=0$ implies $D R=0$. Further if $B$ is a symmetric space, then $TB$ with this structure is a symmetric space as well.
\end{prop}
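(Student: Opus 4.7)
The plan is to specialise the formulas for $DR$ given by the preceding proposition to the case $M=TB$, $\nah=\nac$ (so $\hr=\ccr$), and $\hphi=\hphi_{1}=\Gamma$, and then to verify that every component of $DR$ vanishes under the hypothesis $\nac\ccr=0$. The key preliminary is an algebraic simplification of $\hpsi$ obtained from the first Bianchi identity:
\[
\hpsi(\cx,\cy)\cz=\tfrac{1}{2}\bigl(\ccr(\cx,\cy)\cz+\ccr(\cx,\cz)\cy+\ccr(\cy,\cz)\cx\bigr)=\ccr(\cx,\cz)\cy,
\]
so that $\hpsi$ is encoded by a single application of $\ccr$. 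Under $\nac\ccr=0$ this immediately yields $\nah\hr=0$ and $\nah\hpsi=0$.

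Granting this, items (i)--(iii) of the preceding proposition are identically zero, while items (iv) and (v) and the vertical-constant and horizontal components of (vi) all vanish because they are built from $\nah\hpsi$, $\nah\hr$ and $\nac\ccr$. The only non-trivial piece is the vertical-linear component of (vi). Using $\nah\nah\hpsi=0$ it reduces to proving
\[
\hpsi(\ct,\ccr(\cx,\cy)\cz)+\hpsi(\cy,\cz)\circ\hpsi(\ct,\cx)-\hpsi(\cx,\cz)\circ\hpsi(\ct,\cy)-\ccr(\cx,\cy)\circ\hpsi(\ct,\cz)=0,
\]
and this is where I expect the main obstacle to lie. To handle it, I would evaluate both sides at a vector $\cu$, substitute the simplified form of $\hpsi$, and verify that, writing $r:=\ccr(\ct,\cu)$ and using the antisymmetry of $\ccr$ in its first two slots, the left-hand side becomes
\[
r\bigl(\ccr(\cx,\cy)\cz\bigr)-\ccr(r\cx,\cy)\cz-\ccr(\cx,r\cy)\cz-\ccr(\cx,\cy)(r\cz),
\]
i.e.\ precisely $(r\cdot\ccr)(\cx,\cy)\cz$ for the canonical action of the endomorphism $r$ as a derivation on the $(1,3)$-tensor $\ccr$. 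But $\nac\ccr=0$ combined with the defining relation $[\nac_\ct,\nac_\cu]-\nac_{[\ct,\cu]}=-\ccr(\ct,\cu)$ forces $\ccr(\ct,\cu)\cdot\ccr=0$ as a derivation, so the expression vanishes and $DR=0$.

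For the symmetric-space statement, if $B$ is symmetric then $\nac\ccr=0$ and $B$ is geodesically complete, so the first part yields $DR=0$ and makes $(TB,D)$ locally symmetric. To conclude, I would argue completeness of $(TB,D)$: any geodesic $\gamma$ of $D$ projects under $\pi$ to a geodesic $\sigma$ of $\nac$ on $B$ (since $D_VW=D_XV=D_VX=0$ and $\mathcal{H}D_XY$ is $\pi$-related to $\nac_\cx\cy$), while the vertical component of $\dot\gamma$ along $\sigma$ satisfies a linear second-order ODE with coefficients defined wherever $\sigma$ is. Since $\sigma$ extends to all of $\reels$, so does $\gamma$, and a torsion-free, geodesically complete connection with parallel curvature defines an affine symmetric space, which gives the claim.
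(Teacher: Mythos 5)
The paper states this proposition without proof, so the natural yardstick is the route through the two preceding propositions (the formulas for $R$ and for $DR$), and that is exactly the route you take. Your first part is correct. The simplification
$\hpsi(\cx,\cy)\cz=\tfrac12\ccr(\cx,\cy)\cz+\tfrac12(\ccr(\cx,\cz)\cy+\ccr(\cy,\cz)\cx)=\ccr(\cx,\cz)\cy$
is a correct use of the first Bianchi identity, it gives $\nah\hr=\nah\hpsi=\nah\nah\hpsi=0$ under $\nac\ccr=0$, and this kills items (i)--(v) and all but the vertical-linear piece of (vi). Your evaluation of the four surviving quadratic terms at $\cu$ checks out term by term (e.g.\ $\hpsi(\cy,\cz)\circ\hpsi(\ct,\cx)$ applied to $\cu$ is $\ccr(\cy,r\cx)\cz=-\ccr(r\cx,\cy)\cz$ with $r=\ccr(\ct,\cu)$), the sum is indeed the derivation action $(r\cdot\ccr)(\cx,\cy)\cz$, and the Ricci identity applied to the tensor $\ccr$ shows that $\nac\ccr=0$ forces $\ccr(\ct,\cu)\cdot\ccr=0$. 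Since basic and constant vertical fields frame $TM$ and the $DR$ proposition covers all such argument combinations, $DR=0$ follows.

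The gap is in the second part. Your completeness argument for $(TB,D)$ is fine (vertical geodesics are affine lines in the flat fibers; non-vertical geodesics project to geodesics of $B$ and their vertical data solve a linear second-order ODE along the projection, hence extend as far as the projection does). But the final inference, ``torsion-free, geodesically complete, $DR=0$ implies affine symmetric space,'' is false without simple connectedness: a compact hyperbolic surface is complete with parallel curvature yet admits no global symmetry at a generic point, and $TB$ is simply connected only when $B$ is. A symmetric space $B$ need not be simply connected, so this step does not close. To repair it you should produce the symmetries of $TB$ directly: if $s_b$ is the symmetry of $B$ at $b$, then $Ts_b$ is an affine transformation of $(TB,D)$ (the whole construction, including $\hphi_1$, is natural under affine maps of $B$) fixing $O_b$ with differential $-\mathrm{Id}$ on $\mathcal{H}_{O_b}\oplus\mathcal{V}_{O_b}$, hence is the symmetry at the zero-section point; one then needs a group of affine transformations of $TB$ acting transitively (e.g.\ the tangent group $TG$ of the transvection group $G$ of $B$, acting on $TB=T(G/K)$) to conjugate this symmetry to an arbitrary point $\xi\in T_bB$. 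As it stands, your argument only establishes that $TB$ is complete and locally symmetric.
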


\subsection{Parallel transport}\label{pt}
Suppose $c$ is a (smooth) vertical curve in $M$, {\em i.e.} $\dot c(t)$ is
vertical for any $t$. A vector field along $c$ is parallel if and
only if it can locally be written $t \mapsto (Y+V)_{c(t)}$ with $Y$
basic and $V$ vertical constant.

Suppose now $c$ is any non vertical (smooth) curve in $M$. Locally $c$ is
an integral curve of some vector field $X+U$. Locally any vector field
along $c$ can be extended and written $t \mapsto (Y+V)_{c(t)}$ with $Y$ basic and
$V$ vertical constant. It is parallel if $(D_{\dot c(t)}
(Y+V))_{c(t)}=0$, which can be written $(D_{X+U} (Y+V))_{c(t)}=0$. By splitting this
equation up into horizontal and vertical components we obtain:
$(\nac_{\cx} \cy)_{(\pi \circ c)(t)}=0$ and $(\nah_{\cx} \hv)_{(\pi
\circ c)(t)} + \hpsi(\cx,\cy)_{(\pi \circ c)(t)}(c(t))=0$.

The parallel transport $\tau_c$ along some closed curve $c$, such
that $c(0)=O_{\pi(c(0))}$, can be decomposed as follows: Write $c'$
for the curve $t \mapsto O_{\pi(c(t))}$.
The preceding equations show
that parallel transport of a vertical vector $v\in \mathcal{V}_{x}$ (in $x=c(0)$) along $c$ coincides
with parallel transport of the vector $v\in \mathcal{V}_{O_{\pi(x)}}$ (with the identification $\mathcal{V}_{x}\simeq \mathcal{V}_{O_{\pi(x)}}$) along $c'$. From the
equations one can also deduce that the horizontal part of parallel
transport of a horizontal vector $v$ along $c$ is exactly parallel
transport of $v$ along $c'$.

Note $\tilde c$ the path obtained by concatenating $c$ and
$(c')^{-1}$. $\tau_c$ is the product $\tau_{c'} \circ \tau_{\tilde
c}$. As in the classical proof of the Ambrose-Singer theorem one can
show that $\tau_{\tilde c}$ is generated by the $(\tau_\gamma^*
R)(X,U)$ where $X$ is any horizontal vector in the origin $c(0)$, $U$
any vertical vector in the origin and $\gamma$ any path from the
origin to any point $p$ between $c'(t)$ and $c(t)$ for any $t$.

Note $\cg:=\pi \circ \gamma$. By proposition~\ref{curvature},
$(\tau_\gamma^* R)(X,U)(Y)$ is vertical associated to
$$\tau^{-1}_{\cg}
\hpsi_{\pi(p)}(\tau_{\cg}(\cx),\tau_{\cg}(\cy))\tau_{\cg}(\hu).$$

\subsection{More examples}
Let $B$ be a two dimensional manifold equipped with a torsion-free
non flat connection $\nac$. $T^*B$ equipped with the canonical
pseudo-Riemannian metric of section~\ref{cotangent} and its
Levi-Civita connection $D$ admits then the following holonomy
algebra:
$$Hol(D)=\left\{ \left( \begin{smallmatrix} -^{t}a & b\\0 & a\end{smallmatrix}
\right)|\; a\in Hol(\nac), b=-^{t}b \right\}.$$

To see this, apply the discussion of section~\ref{pt}. The condition
$b=-^{t}b$ follows from the fact that $M$ is pseudo-Riemannian. It
is clear that in dimension $2$, $b$ is necessarily a multiple of
$\left(
\begin{smallmatrix}0 & 1\\-1 & 0\end{smallmatrix} \right)$. It is
enough to show that at least in one point of $M$, $R(X,U)$ is non
vanishing. But this follows from the non flatness of $\nac$.

Note that any connected subgroup of $Gl(2,\reels)$ is a restricted
holonomy group of a torsion-free connection $\nac$ on a two
dimensional manifold(see \cite{krantz}).

So we obtain for every candidate of indecomposable metric holonomy of signature (2,2) with a single totally isotropic invariant plane
listed in \cite{LBBAI} a corresponding metric.

Note
$J=\left(\begin{smallmatrix} 0 & 1\\-1 & 0\end{smallmatrix} \right)$;


$K_{2,\lambda} := \left\{ \left( \begin{smallmatrix} \alpha & \gamma\\0 & \alpha^\lambda \end{smallmatrix} \right); \; \alpha, \gamma\in\reels, \alpha>0 \right\}$ with $\lambda\in\reels$;

$K_{2,\infty} := \left\{ \left( \begin{smallmatrix} 1 & \gamma\\0 & \alpha \end{smallmatrix} \right); \; \alpha, \gamma\in\reels, \alpha>0 \right\}$;

$K_3 := \left\{ \left( \begin{smallmatrix} \alpha & \gamma\\0 & \beta \end{smallmatrix} \right); \; \alpha, \beta, \gamma\in\reels, \; \alpha>0, \beta>0 \right\}$;

$K_4 := SL(2,\reels)$;

$K_5 := GL^+(2,\reels)$;

$K_6 := \left\{ \left( \begin{smallmatrix} \alpha & \beta\\-\beta & \alpha \end{smallmatrix} \right); \; \alpha, \beta\in\reels,\; \alpha^2+\beta^2>0 \right\}$;

$U_1$ the connected group associated to the Lie algebra
$\left\{\left(\begin{smallmatrix} \alpha A & 0\\0 & -\alpha^{t}\!A \end{smallmatrix} \right); \alpha\in\reels \right\}$ with $A=\left(\begin{smallmatrix} 1 & 1\\0 & 1 \end{smallmatrix} \right)$;

$U_{2,\lambda}$ the connected group associated to the Lie algebra
$\left\{\left(\begin{smallmatrix} \alpha C & 0\\0 & -\alpha^{t}\!C \end{smallmatrix} \right); \alpha\in\reels \right\}$, with $C=\left(\begin{smallmatrix} \lambda & 1\\-1 & \lambda \end{smallmatrix} \right)$ and $\lambda\in\reels$, $\lambda\ge 0$;

$\mathbb{A}$ the connected group associated to the Lie algebra $\left\{\left(\begin{smallmatrix} 0 & \alpha J\\0 & 0 \end{smallmatrix} \right); \alpha\in\reels\right\}$;

$\mathbb{B}$ the connected group associated to the Lie algebra $\left\{\left(\begin{smallmatrix} \alpha N & \beta J\\0 & -\alpha^{t}N \end{smallmatrix} \right); \alpha, \beta\in\reels\right\}$ with $N=\left(\begin{smallmatrix} 0 & 1\\0 & 0 \end{smallmatrix}\right);$

\begin{theo}
The restricted holonomy groups of an indecomposable non irreducible semi-riemannian manifold of signature $(2,2)$ leaving invariant a single totally isotropic plane are up to an isomorphism $\mathbb B$, $U_1\cdot \mathbb{A}$, $U_{2,\lambda}\cdot \mathbb{A} (\lambda\in\reels)$, $K_{2,\lambda}\cdot\mathbb{A} (\lambda\in\reels\cup\{\infty\})$,
$K_6\cdot \mathbb{A}$, $K_3\cdot \mathbb{A}$, $K_4\cdot \mathbb{A}$ and $K_5\cdot \mathbb{A}$.

\end{theo}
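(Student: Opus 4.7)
The plan is to combine two ingredients: the classification of candidate indecomposable holonomies in signature $(2,2)$ established in \cite{LBBAI}, which supplies the upper bound that no other connected group can occur, with an explicit realization of each candidate by the construction of section~\ref{cotangent} applied to a well-chosen two-dimensional base.

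I would begin by recalling what the paragraph immediately preceding the theorem already proves: whenever $B$ is a $2$-dimensional manifold with torsion-free connection $\nac$ and $M=T^*B$ carries the canonical pseudo-Riemannian metric of section~\ref{cotangent}, the (Lie algebra of the) restricted holonomy of its Levi-Civita connection $D$ is
\[
Hol(D)=\left\{ \begin{pmatrix} -^{t}a & b\\ 0 & a\end{pmatrix} : a\in Hol(\nac),\ b=-^{t}b\right\}.
\]
In dimension two, the skew-symmetric $2\times 2$ matrices form the one-dimensional space $\reels\cdot J$, so the $b$-block contributes exactly the Lie algebra of $\mathbb{A}$. The explicit description of $(\tau_\gamma^* R)(X,U)$ in section~\ref{pt}, combined with proposition~\ref{curvature}, shows that this $\mathbb{A}$-factor is genuinely present as soon as $\nac$ is non-flat.

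Next I would invoke \cite{krantz}, which states that every connected Lie subgroup of $GL(2,\reels)$ is the restricted holonomy of some torsion-free connection on a two-dimensional manifold, and that whenever this subgroup is non-trivial the connection can be chosen non-flat. It then suffices to match each group in the theorem to a connected subgroup of $GL(2,\reels)$ used as $Hol(\nac)$: $\mathbb{B}$ arises from the one-parameter unipotent subgroup generated by $N$; $U_1\cdot\mathbb{A}$ and $U_{2,\lambda}\cdot\mathbb{A}$ from the one-parameter subgroups generated by $A$ and $C$ respectively; and $K_i\cdot\mathbb{A}$ from taking $Hol(\nac)=K_i$ itself, for $i=2,\lambda$ with $\lambda\in\reels\cup\{\infty\}$ and for $i=3,4,5,6$. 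Up to conjugation in $GL(2,\reels)$, this collection exhausts the candidate list of \cite{LBBAI}.

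The hard part, and what this paper's construction actually accomplishes, is the realization itself: before the present construction, no metrics were known for several of these candidates, in particular for $\mathbb{B}$ and for the families $U_{2,\lambda}\cdot\mathbb{A}$. A technical point to verify case by case is indecomposability, i.e.\ that the canonical vertical totally isotropic plane $\mathcal{V}$ is the \emph{only} invariant subspace of its type; this follows from the triangular shape of $Hol(D)$ together with the non-triviality of the $\mathbb{A}$-block, which forbids a $Hol(D)$-invariant complement to $\mathcal{V}$. The upper bound, asserting that no further connected group can occur, is imported directly from the classification of \cite{LBBAI}.
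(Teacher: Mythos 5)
Your proposal follows essentially the same route as the paper: the paper proves the theorem by the discussion immediately preceding it, namely the computation of $Hol(D)$ for $T^*B$ over a two-dimensional base via section~\ref{pt} (with non-flatness of $\nac$ guaranteeing the $\mathbb{A}$-block), the realization of any prescribed connected subgroup of $GL(2,\reels)$ as $Hol(\nac)$ via \cite{krantz}, and the candidate list of \cite{LBBAI} as the upper bound. Your case-by-case matching of each listed group to a choice of $Hol(\nac)$ is exactly the intended argument, so the proposal is correct and essentially identical to the paper's reasoning.
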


For $(B,\nac)$ a locally symmetric space of dimension $2$, $Hol_o(\nac)$ is either
$SO_0(2)$, $SO_0(1,1)$ or $\left\{\left(\begin{smallmatrix} 1 & t\\0
& 1\end{smallmatrix}\right) | \; t\in\reels \right\}$, and then
$T^*B$ equipped with the preceding pseudo-riemannian metric is a locally symmetric space.


\begin{thebibliography}{CD9999}
\bibitem[BBI]{LBBAI} L. Bérard Bergery, A. Ikemakhen,
\textsl{Sur l'holonomie des variétés pseudo-riemanniennes de
signature $(n,n)$}, Bulletin de la Société Mathématique de France,
125 no. 1 (1997), pp. 93-114.
\bibitem[Be]{besse} A. L.~Besse, \textsl{Einstein manifolds}, Springer-Verlag,
1986.
\bibitem[K]{krantz} T.~Krantz, \textsl{Holonomie des connexions sans torsion}, PhD thesis of the Université Henri Poincaré Nancy I,
2007.
\bibitem[LG]{lg} T.~Leistner, A.~Galaev, \textsl{Recent developments in pseudo-Riemannian holonomy theory}, Handbook of Pseudo-Riemannian Geometry, Institut de Recherche Math\'{e}matique Avanc\'{e}e, Lectures in Mathematics and Theoretical Physics, 38 pages, (to appear).
\bibitem[S]{spivak} M.~Spivak, \textsl{A Comprehensive Introduction to Differential Geometry} (in 5 volumes), Publish or Perish,
    1979.
\end{thebibliography}
\end{document}